\newtheorem{theorem}{Theorem}
\newtheorem{lemma}{Lemma}
\newtheorem{proposition}{Proposition}
\newtheorem{corollary}{Corollary}
\newtheorem{example}{Example}
\title{Magnitude of arithmetic scalar and matrix categories}
\author{Steve Huntsman
\institute{STR}% \\ Arlington, Virginia}
\email{steve.huntsman@str.us}
}
\begin{document}
\maketitle

\begin{abstract}
%This is all very abstract except for the places where it isn't
We develop tools for explicitly constructing categories enriched over generating data and that compose via ordinary scalar and matrix arithmetic arithmetic operations. We characterize meaningful size maps, weightings, and magnitude that reveal features analogous to outliers that these same notions have previously been shown to reveal in the context of metric spaces. Throughout, we provide examples of such ``outlier detection'' relevant to the analysis of computer programs, neural networks, cyber-physical systems, and networks of communications channels.
\end{abstract}

\section{\label{sec:intro}Introduction}

We consider a fairly general model $\mathbf{C}$ of a finite state machine or similar structure in which matrix data associated to states and/or transitions compose coherently, compatibly, and scalably with ordinary scalar and matrix arithmetic (e.g., Jacobians in numerical programs; filters in signal processing systems, etc.). That is, $\mathbf{C}$ is a category enriched \cite{kelly1982basic,fong2019invitation} over a suitable category $\mathbf{M}$ of matrices, with the underlying scalars as a special (and as it turns out, universal) case. For a scalar example, see Figure \ref{fig:scalarExample}. 

As a motivating conceptual example that is mathematically archetypal, consider a system of networked components with various external inputs and outputs. It might be that some co-located inputs must take different internal paths through the system because of engineering considerations, yet still necessary to instantiate behavior that is completely independent of the path taken. In the linear time-invariant setting \cite{antsaklis1997linear,liberzon2003switching,spivak2015steady,bakirtzis2021categorical}, this essentially means multiplying different matrices along different paths, yet producing the same result at any point where paths meet. 
\footnote{
In situations where there might be path dependence, a workaround is to consider finite sub-polytrees of the universal cover of $D$ \cite{huntsman2022magnitude}. This is akin to loop unrolling for a compiler \cite{cooper2011engineering}. A violation of path-independent compositionality would indicate nonzero curvature (that physicists call a ``field strength tensor'') in a principal bundle over a discrete directed base \cite{dimakis1994differential,dimakis1994discrete,maldacena2015symmetry}. 
}
This is the case of taking matrix multiplication (or perhaps componentwise multiplication in a Fourier basis) as monoidal product \cite{mac2013categories,fong2019invitation} in $\mathbf{M}$.

We aim to probe the geometry of $\mathbf{C}$ by analogy with subsets of Euclidean space, where the \emph{weightings} recalled in \S \ref{sec:magnitude} provide an excellent scale-dependent boundary/outlier detection mechanism \cite{willerton2009heuristic,bunch2020practical,huntsman2022diversity}. This boundary-detecting behavior is related to the potential-theoretical notion of Bessel capacities \cite{meckes2015magnitude}. 

The following paragraphs give a bit more specificity. For a finite digraph $D = (V,A)$, the \emph{category $\langle D \rangle$ determined by $D$} is a digraph with objects/vertices $V(D)$ and morphisms/arcs $A(\langle D \rangle)$ given by self-loops on all vertices along with any arcs in the transitive closure of $D$. We want to use ``generating'' data in such a way as to obtain a $\mathbf{M}$-category $\mathbf{C}$ with underlying category $\langle D \rangle$. Such a construction can model memoryless systems in which series of transitions produce effects that only depend on the initial and final states. A similar construction in which nondeterministic automata are endowed with a scalar ``cost function'' on inputs was briefly considered in \S 5.3 of \cite{cho2019quantales}.

Taking $\mathbf{M}$ to be the discrete monoidal category $M_n(R)$ of $n \times n$ matrices over a commutative ring $R$ with product given by matrix multiplication informs switched linear systems \cite{liberzon2003switching}. 
Taking $\mathbf{M}$ to be $\text{Arr}(\mathbf{FinVect})$ or $\mathbf{FinStoch}$ with product given by the ordinary (Kronecker) tensor product informs quantum circuits \cite{nielsen2010quantum} and the information theory of networks of discrete memoryless channels \cite{shannon1957certain}. 
We treat these examples in turn, illustrating how to find ``outliers'' in structures directly relevant to computer program analysis, neural networks, cyber-physical systems, and networks of communications channels.

We can do this economically because the semiring $S$ involved in the constructions below is universal in the following sense: there exists a unique $S$-category $\mathbf{C}$ such that $Z_{jk} = \mathbf{C}(j,k)$, and subsequent calculations only rely on ordinary matrix arithmetic.
\footnote{
Recall that a semiring $S$ is a discrete monoidal category with auxiliary additive structure.
}
In the cases we will deal with $\mathbf{M}$ is parameterized %(whether ``internally'' or ``externally'') 
by a dimension, with the dimension one case essentially recovering $S$, so that the \emph{size map} introduced in \S \ref{sec:magnitude} is essentially a projection. In this sense, considering the \emph{magnitude} of $\mathbf{M}$-categories \emph{versus} $S$-categories \emph{per se} is mostly uninteresting apart from the identification of a size map, and it factors through a universal case. However, the \emph{construction} of $\mathbf{M}$-categories introduces a bit more nuance than the construction of $S$-categories and also gives important context for applications.

The paper is organized as follows. \S \ref{sec:magnitude} reviews the category-theoretic formulation of magnitude. \S \ref{sec:Scalar} discusses the existence, construction, and analysis of categories enriched over ordinary scalar arithmetic, with examples relevant to the analysis of computer programs and structures informing neural networks. \S \ref{sec:Matrix} extends these results to the context of categories enriched over ordinary matrix arithmetic, with an example relevant to networks of communications channels. Finally, \S \ref{sec:Remarks} makes some speculative remarks on combining arithmetic and process-oriented data.

\section{\label{sec:magnitude}Magnitude}

Let $\mathbf{M} = (\mathbf{M},\otimes,1)$ be a monoidal category \cite{mac2013categories,fong2019invitation} and $\mathbf{C}$ a finite $\mathbf{M}$-category. Recall that this means that $\mathbf{C}$ is specified by a finite set $\text{Ob}(\mathbf{C})$; hom-objects $\mathbf{C}(j,k) \in \mathbf{M}$ for all $j, k \in \text{Ob}(\mathbf{C})$; identity morphisms $1 \rightarrow \mathbf{C}(j,j)$ for all $j \in \text{Ob}(\mathbf{C})$; and composition morphisms $\mathbf{C}(j,k) \otimes \mathbf{C}(k,\ell) \rightarrow \mathbf{C}(j,\ell)$ for all $j, k, \ell \in \text{Ob}(\mathbf{C})$, all satisfying associativity and unitality properties \cite{kelly1982basic,fong2019invitation}.

The theory of magnitude \cite{leinster2017magnitude,leinster2021entropy} takes two principal inputs. The first input is a $\mathbf{M}$-category $\mathbf{C}$. The second input is a \emph{size map} $\sigma : \text{Ob}(\mathbf{M}) \rightarrow S$ where $S$ is a semiring. 
\footnote{
In our context, when $\mathbf{M}$ is a semiring like $M_n(R)$ it is possible--and may be useful--to take $\sigma$ to be the identity so $S = \mathbf{M}$.
}
The size map is required to be constant on isomorphism classes and to satisfy $\sigma(1) = 1$ and $\sigma(X \otimes Y) = \sigma(X) \odot \sigma(Y)$, where the semiring unit and multiplication are indicated on the right-hand sides. If $n := |\text{Ob}(\mathbf{C})| < \infty$ then its \emph{similarity matrix} $Z \in M_n(S)$ is given by $Z_{jk} := \sigma(\mathbf{C}(j,k))$. Introducing the (common) notation $$(f[X])_{jk} := f(X_{jk})$$ as a shorthand where $X$ is a matrix over $S$ and $f$ is a function on $S$, we have $Z := \sigma[\mathbf{C}]$.

A \emph{weighting} is a column vector $w$ satisfying $Zw = 1$, where the semiring matrix multiplication and column vector of ones are indicated. A \emph{coweighting} is the transpose of a weighting for $Z^T$. If $Z$ has both a weighting and a coweighting, its \emph{magnitude} is the sum of the components (both sums coincide).

Presently, examples and applications of magnitude are focused almost entirely on \emph{Lawvere metric spaces}, i.e., categories enriched over $([0,\infty],\ge)$ where the monoidal product is ordinary addition. These are also known as \emph{extended quasipseudometric spaces} since they generalize metric spaces by allowing distances that are infinite (extended), asymmetric (quasi-), or zero (pseudo-). As far as we are aware, the only exceptions to this focus at present are this paper and \cite{chuang2016magnitude,huntsman2022magnitude}.

\section{\label{sec:Scalar}Scalar categories}

Let $S \in \{\mathbb{R},\mathbb{C}\}$ and consider the system 
\begin{equation}
\label{eq:multiplicative}
Z_{jk} Z_{k\ell} = Z_{j\ell}
\end{equation}
for $(j,k), (k,\ell), (j, \ell) \in A(\langle D \rangle)$.
\footnote{
If we write $d := \tau \log[Z]$ for a suitable scalar $\tau$, then \eqref{eq:multiplicative} takes the form of the triangle equality $d_{jk} + d_{k\ell} = d_{j\ell}$, which highlights a similarity with ``vanilla'' magnitude of Lawvere metric spaces that obey a criterion similar to Menger convexity \cite{leinster2021magnitude}. There are other related notions: a weighted undirected graph is called \emph{graph-geodetic} (respectively, \emph{cutpoint-additive}) when the preceding triangle inequality holds if (respectively, iff) every path from $j$ to $\ell$ passes through $k$ \cite{klein1998distances,deza2009encyclopedia,chebotarev2020hitting}. 
%Weighted graphs that satisfy this triangle equality whenever every path from $j$ to $\ell$ passes through $k$ are called \emph{graph-geodetic} \cite{klein1998distances}, but the directed analogue appears to only have been considered in the literature very recently: the \emph{hitting time quasimetric} of an ergodic Markov chain \cite{chebotarev2020hitting} furnishes the only nontrivial class of examples we are aware of. Interestingly for applications to fuzzing \cite{zeller2019fuzzing}, the objective of coverage-based greybox fuzzing is essentially to impose a latent Markov chain structure on a program that optimizes the hitting time quasimetric \cite{bohme2016coverage}. By analyzing the observed Markov chain in this context, we can therefore hope to better direct a coverage-based greybox fuzzing process.
}
This system is satisfied iff $Z$ defines a $S$-category $\mathbf{C}$ such that $Z_{jk} = \mathbf{C}(j,k)$. A class of solutions to \eqref{eq:multiplicative} is 
\begin{equation}
\label{eq:vertexData}
Z_{jk} := p_j^{-1} p_k
\end{equation}
for $p : V(D) \rightarrow S_\times$. We will show that \eqref{eq:vertexData} turns out to be the general nondegenerate case. 
%One method to produce solutions is to exponentiate weights that satisfy a suitable minimality constraint on paths of the sort described in \cite{cho2019quantales}. 
%Taking $p : V(D) \rightarrow \mathbb{Z}$ gives solutions over $\mathbb{Q}$. 
For reasons that will become apparent in \S \ref{sec:Matrix1}, it will be helpful to establish when integral solutions exist. 

Let $\tilde A(D)$ denote the arcs in $D$ that are not loops. Let $\Gamma_2(D) := \{(j,k,\ell) : (j,k), (k,\ell) \in \tilde A(\langle D \rangle); j \ne \ell \}$ be the set of nondegenerate paths in $\langle D \rangle$ of length two. If $|\Gamma_2(D)| = 0$, then we can trivially obtain all solutions of \eqref{eq:multiplicative}, so assume w.l.o.g. $|\Gamma_2(D)| > 0$. Lemma \ref{lemma:multiplication} gives a general solution to \eqref{eq:multiplicative} (see Figure \ref{fig:scalarExample}).

\begin{lemma}
\label{lemma:multiplication}
Define a $|\Gamma_2(D)| \times |\tilde A(\langle D \rangle)|$ matrix $M$ to have entries that are zero except for $M_{(j,k,\ell),(j,k)} = 1$, $M_{(j,k,\ell),(k,\ell)} = 1$, and $M_{(j,k,\ell),(j,\ell)} = -1$. Then $m:= \dim \ker M > 0$ and any solution to \eqref{eq:multiplicative} is of the form
\begin{equation}
\label{eq:kerAssign}
Z_{jk} = \prod_{i=1}^m c_i^{y^{(i)}_{(j,k)}}
\end{equation}
where $\{y^{(i)}\}_{i = 1}^m$ is a basis for $\ker M$ and $(c_i)_{i=1}^m \in S^m$.
\end{lemma}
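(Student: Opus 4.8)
The plan is to linearize \eqref{eq:multiplicative} by passing to logarithms, read the solution space off of $\ker M$, and defer the integrality bookkeeping to the end. First I would record that loops are inert: setting $j=k=\ell$ in \eqref{eq:multiplicative} forces $Z_{jj}^2=Z_{jj}$, so in the nondegenerate case $Z_{jj}=1$, which is exactly why it suffices to index $M$ by the non-loop arcs $\tilde A(\langle D\rangle)$ and the nondegenerate length-two paths $\Gamma_2(D)$. The inequality $m>0$ is then the easy half: for any $q\colon V(D)\to S$ the ``coboundary'' $y_{(a,b)}:=q_b-q_a$ lies in $\ker M$, since each defining combination $y_{(j,k)}+y_{(k,\ell)}-y_{(j,\ell)}$ telescopes to $0$; taking $q$ supported at a single endpoint of a non-loop arc (one exists because $\Gamma_2(D)\neq\varnothing$) produces a nonzero such $y$, so $\ker M\neq 0$.

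For the structural claim I would set $x_{(a,b)}:=\log Z_{(a,b)}$, legitimate since nondegeneracy gives $Z_{(a,b)}\in S_\times$. The rows of $M$ are precisely the left-hand sides of the nondegenerate length-two relations, so a nonzero $Z$ satisfies them iff $x\in\ker M$. Because $M$ has integer entries, $\ker M$ admits a basis $\{y^{(i)}\}_{i=1}^m$ of integer vectors, and such a basis is simultaneously an $S$-basis of $\ker M$; writing $x=\sum_i\lambda_i y^{(i)}$ and exponentiating gives $Z_{(a,b)}=\prod_i c_i^{y^{(i)}_{(a,b)}}$ with $c_i:=\exp\lambda_i\in S_\times$, which is \eqref{eq:kerAssign}. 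The integrality of the exponents $y^{(i)}_{(a,b)}$ is essential: it is what makes each factor $c_i^{y^{(i)}_{(a,b)}}$ unambiguous for arbitrary $c_i\in S_\times$ (in particular it accommodates negative real entries of $Z$), and it is the sense in which the lemma ``establishes when integral solutions exist.''

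The step I expect to be the genuine obstacle is reconciling the multiplicative solution set with $\ker M$ on the nose, because the logarithm is only defined up to $2\pi i\,\mathbb{Z}$: a priori a solution $Z$ yields $x$ with $Mx\in 2\pi i\,\mathbb{Z}^{\Gamma_2(D)}$, not $Mx=0$. One checks that this integer vector $\mu:=Mx/2\pi i$ automatically satisfies every linear dependence among the rows of $M$ (these dependences are the associativity relations coming from length-three paths), so $\mu\in\operatorname{im}_{\mathbb{Q}}M$; the branches of the $x_{(a,b)}$ can then be adjusted to achieve $Mx=0$ exactly precisely when $\mu\in\operatorname{im}_{\mathbb{Z}}M$. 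Hence the whole argument reduces to showing that $\operatorname{coker}M$ (equivalently $\operatorname{coker}M^{T}$) is torsion-free, i.e.\ that every nonzero invariant factor of $M$ is $1$. I would prove this combinatorially by exploiting transitivity of $\langle D\rangle$: order the arcs so that each genuinely composite arc $(j,\ell)$ is eliminated using the $-1$ entry of some row $(j,k,\ell)$ as a unit pivot, reducing $M$ by integer row and column operations to a form supported on the generating (non-composite) arcs with all pivots $\pm 1$. This simultaneously shows $\dim\ker M=|V(D)|-c$ (with $c$ the number of weakly connected components), identifies $\ker M$ with the coboundary space, and thereby recovers the vertex-data form \eqref{eq:vertexData} as the general nondegenerate solution.
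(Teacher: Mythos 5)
Your proposal follows the same core strategy as the paper's proof---linearize \eqref{eq:multiplicative} by taking logarithms and identify solutions with $\ker M$---but it is more careful in two places, and one of these is a genuine improvement. Your coboundary argument for $m>0$ is correct and is actually more robust than the paper's, which deduces $m>0$ from the claim $|\Gamma_2(D)| < |\tilde A(\langle D \rangle)|$; that counting claim fails in general (for the complete DAG on $n$ vertices, $|\Gamma_2| = \binom{n}{3} \ge \binom{n}{2} = |\tilde A|$ once $n \ge 5$), whereas your observation that every coboundary $y_{(a,b)} = q_b - q_a$ lies in $\ker M$ gives $m>0$ unconditionally. Likewise, the branch-of-logarithm issue you raise is real---over $\mathbb{C}$, or over $\mathbb{R}$ with negative entries, a naive choice of $\log$ only gives $Mx \in 2\pi i\,\mathbb{Z}^{\Gamma_2(D)}$---and the paper's proof silently ignores it; your reduction of this issue to torsion-freeness of $\operatorname{coker} M$ is also correct.

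The gap is in your proposed proof of that torsion-freeness. The unit-pivot elimination fails as described. Take $D$ to be the directed $3$-cycle, so $\tilde A(\langle D \rangle)$ consists of all six ordered pairs and $|\Gamma_2(D)|=6$. Every arc $(j,\ell)$ is then ``genuinely composite,'' so your recipe designates six pivot rows; but $\operatorname{rank} M = |\tilde A| - (|V|-1) = 4$, so these pivots cannot all be executed. Concretely, pivoting row $(1,2,3)$ on its $-1$ entry at column $(1,3)$ forces adding that row to row $(1,3,2)$ (which has $+1$ at column $(1,3)$), and this annihilates the $-1$ entry of row $(1,3,2)$ at column $(1,2)$---the designated pivot for arc $(1,2)$ is destroyed by fill-in, and $(1,3,2)$ is the \emph{only} row of the form $(1,k,2)$. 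The same example exposes an internal inconsistency: your recipe would give $\dim\ker M = $ number of non-composite arcs $=0$, contradicting your own (correct) claim that $\dim\ker M = |V|-c = 2$. Two ways to close the gap: (i) prove that over \emph{every} field (indeed every abelian group) the solution space of the additive system $y_{(j,k)}+y_{(k,\ell)}=y_{(j,\ell)}$ is exactly the coboundary space---this is the additive form of the spanning-polytree arguments of Theorems \ref{theorem:multiplication} and \ref{theorem:characterization} and is field-independent---so that $\operatorname{rank}_{\mathbb{F}_q} M = \operatorname{rank}_{\mathbb{Q}} M$ for every prime $q$ and all invariant factors of $M$ equal $1$; or (ii) bypass branches entirely: by Theorem \ref{theorem:characterization} (whose proof does not rely on this lemma, so there is no circularity, only a reordering) any nondegenerate solution is $Z_{jk} = p_j^{-1}p_k$, and choosing any $u$ with $p = \exp[u]$ produces $x_{(j,k)} := u_k - u_j$ lying \emph{exactly} in $\ker M$ with $\exp[x] = Z$, after which your expansion in an integer basis finishes the proof.
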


\begin{proof}
W.l.o.g., assume that $\langle D \rangle$ is weakly connected. If $(j,k,\ell) \in \Gamma_2$, then by transitivity of composition $(j,\ell) \in \tilde A(\langle D \rangle)$ and similarly $|\Gamma_2(D)| < |\tilde A(\langle D \rangle)|$. $M y = 0$ iff $y_{(j,k)} + y_{(k,\ell)} - y_{(j,\ell)} = 0$, so we can take $y_{(j,k)} = \log Z_{jk}$. Since $|\Gamma_2(D)| < |\tilde A(\langle D \rangle)|$, we have $m:= \dim \ker M > 0$.
\end{proof}

By normalizing the reduced row echelon form, we get $y^{(i)} \in \mathbb{Z}^{| \tilde A(\langle D \rangle)|}$. If $(c_i)_{i=1}^m \in \mathbb{Z}_+^m$, then $Z_{jk} \in \mathbb{Q}_+$.

\begin{proposition}
\label{proposition:multiplication}
If $D$ (or equivalently, $\langle D \rangle$) is a directed acyclic graph (DAG), the system \eqref{eq:multiplicative} admits nontrivial solutions over $\mathbb{Z}_+$. More generally, $\prod_{(j,k) \in \gamma} Z_{jk} = 1$ for any cycle $\gamma \in \langle D \rangle$, so any solution over $\mathbb{Z}_+$ must be unity on any arcs that are in a cycle: in particular, if $(j,k)$ is in a cycle, then $Z_{jk} = Z_{kj}^{-1}$.
\end{proposition}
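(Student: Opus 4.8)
The plan is to establish the four assertions in turn, deriving the cycle identity first since the positivity and inversion claims follow from it almost immediately. For the DAG case I would exhibit an explicit nontrivial solution of the vertex-data form \eqref{eq:vertexData}. Fix a topological ordering of $\langle D \rangle$ and let $r_j$ denote the length of a longest directed path terminating at $j$, a proper rank function for which $r$ strictly increases along every directed path. Setting $p_j := 2^{r_j}$ and $Z_{jk} := p_j^{-1} p_k = 2^{r_k - r_j}$ gives a solution of \eqref{eq:multiplicative} automatically, by the same telescoping cancellation that validates \eqref{eq:vertexData} in general. Because $\langle D \rangle$ is acyclic, every non-loop arc $(j,k)$ satisfies $r_j < r_k$, so $Z_{jk} = 2^{r_k - r_j} \in \mathbb{Z}_+$, while $Z_{jj} = 1$; since the standing assumption $|\Gamma_2(D)| > 0$ forces at least one non-loop arc, some $Z_{jk} > 1$ and the solution is nontrivial.

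For the cycle identity I would telescope along an arbitrary cycle $\gamma : v_0 \to v_1 \to \cdots \to v_L = v_0$ in $\langle D \rangle$. The key structural fact is that $\langle D \rangle$ is transitively closed, so each chord $(v_0, v_i)$ is again an arc and each triple $(v_0, v_i, v_{i+1})$ is a legitimate composition to which \eqref{eq:multiplicative} applies. Repeated application collapses the product $Z_{v_0 v_1} Z_{v_1 v_2} \cdots Z_{v_{L-1} v_0}$ step by step to $Z_{v_0 v_0}$. It then remains to pin down $Z_{v_0 v_0}$: instantiating \eqref{eq:multiplicative} at $j = k = \ell = v_0$ yields the idempotence $Z_{v_0 v_0}^2 = Z_{v_0 v_0}$, whose only nondegenerate (equivalently, positive) solution is $1$. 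Hence $\prod_{(j,k) \in \gamma} Z_{jk} = 1$.

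The two remaining claims are then formal consequences. Over $\mathbb{Z}_+$ every factor satisfies $Z_{jk} \ge 1$, so a product of such factors equal to $1$ forces each factor to equal $1$; thus any arc lying in a cycle carries $Z_{jk} = 1$. Finally, if $(j,k)$ lies in a cycle then a return path from $k$ to $j$ exists, so $(k,j) \in A(\langle D \rangle)$ by transitive closure, and instantiating \eqref{eq:multiplicative} at $(j,k,j)$ gives $Z_{jk} Z_{kj} = Z_{jj} = 1$, that is, $Z_{jk} = Z_{kj}^{-1}$. I expect the main obstacle to be the bookkeeping in the telescoping step, namely verifying that every intermediate chord and composition triple genuinely lies in $\langle D \rangle$, rather than any substantive difficulty; once the cycle identity is in hand, the positivity conclusion and the inversion relation are purely elementary.
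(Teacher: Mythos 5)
Your proof is correct; the more important observation is that the paper itself supplies no proof of this proposition at all. It is stated bare, with the DAG existence claim effectively substantiated only later by Theorem \ref{theorem:multiplication} (on a DAG, the spanning-tree assignment \eqref{eq:treeAssign} with $W : E(T) \rightarrow \mathbb{Z}_+$ and $\varepsilon \equiv 1$ yields $Z_{jk} \in \mathbb{Z}_+$), and with the cycle identity left implicitly to transitivity. Your argument is therefore necessarily a different, self-contained route, and it fills a genuine gap. For existence you build an explicit solution from the longest-path rank function, $p_j = 2^{r_j}$ and $Z_{jk} = 2^{r_k - r_j}$; this is an instance of the vertex-data form \eqref{eq:vertexData} and in effect anticipates Theorem \ref{theorem:characterization}, whereas the paper's implicit route parameterizes solutions by data on a spanning tree. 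Your telescoping proof of the cycle identity, collapsing the product along $\gamma$ onto $Z_{v_0 v_0}$ via the chords $(v_0, v_i)$ guaranteed by transitive closure, is exactly the verification the paper omits, and your care in resolving $Z_{v_0 v_0}^2 = Z_{v_0 v_0}$ is the right caveat: the identically-zero assignment satisfies \eqref{eq:multiplicative} yet has cycle product $0$, so the ``more generally'' clause tacitly relies on the paper's standing nondegeneracy convention (automatic over $\mathbb{Z}_+$, where all entries are positive). The closing deductions --- that factors in $\mathbb{Z}_+$ with product $1$ must each equal $1$, and that instantiating \eqref{eq:multiplicative} at the triple $(j,k,j)$ gives $Z_{jk} Z_{kj} = Z_{jj} = 1$ --- are exactly right. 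One stylistic point: the topological ordering you fix at the outset is never used, since the rank function $r$ alone carries the argument.
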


We now aim at closed forms for generating data on arcs and on vertices. Let $U$ be the functor from quivers to undirected graphs that forgets arc orientations and multiplicities. For $(i,i') \in V(D)^2$, write $$\varepsilon(i,i') := \begin{cases} 1 & \text{ if } (i,i') \in A(D) \\ -1 & \text{ if } (i',i) \in A(D) \text{ and } (i,i') \notin A(D) \\ 0 & \text{ otherwise } \end{cases}.$$

\begin{theorem}
\label{theorem:multiplication}
Let $D$ be weak (= weakly connected), $T$ be a spanning tree of $U(D)$, and $W : E(T) \rightarrow S$ nondegenerate, where here as usual $E(\cdot)$ indicates the edges of an undirected graph. The assignment
\begin{equation}
\label{eq:treeAssign}
Z_{jk} = \prod_{(i,i') \in T[j,k]} W(i,i')^{\varepsilon(i,i')},
\end{equation}
with the product over edges in the path $T[j,k]$ in $T$ from $j$ to $k$, is well-defined and satisfies \eqref{eq:multiplicative}. 
%In particular, if $W : E(T) \rightarrow \mathbb{Q}_+$, then $Z_{jk} \in \mathbb{Q}_+$ identically. 
If $D$ is a DAG, then $W : E(T) \rightarrow \mathbb{Z}_+$ yields $Z_{jk} \in \mathbb{Z}_+$. Finally, any nondegenerate solution of \eqref{eq:multiplicative} has the form \eqref{eq:treeAssign}.
\end{theorem}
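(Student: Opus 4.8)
The plan is to prove the three claims of Theorem~\ref{theorem:multiplication} in sequence: well-definedness and satisfaction of \eqref{eq:multiplicative} for the tree-based assignment, the integrality statement for DAGs, and finally the converse that every nondegenerate solution arises this way.

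For \textbf{well-definedness}, I would first observe that since $T$ is a spanning tree of $U(D)$, there is a unique path $T[j,k]$ between any two vertices, so the product in \eqref{eq:treeAssign} is unambiguous once we fix an orientation convention via $\varepsilon$. The role of $\varepsilon(i,i')$ is to record, for each undirected edge traversed in the direction $i \to i'$, whether this agrees ($+1$) or disagrees ($-1$) with the arc orientation in $D$, so that $W$ is consistently interpreted as data on the underlying edge while $Z$ respects arc direction. I would check that $Z_{jk} Z_{kj} = 1$, i.e., reversing the path inverts each factor (because traversing an edge in the opposite direction flips the sign of every $\varepsilon$), which is forced by the multiplicative structure. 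To verify \eqref{eq:multiplicative}, I would use the additivity of paths in a tree: for $(j,k,\ell) \in \Gamma_2(D)$, the path $T[j,\ell]$ need not be the concatenation $T[j,k] \cup T[k,\ell]$, but in a tree the symmetric difference of these edge-paths is empty after cancellation, so $\prod_{T[j,k]} \cdot \prod_{T[k,\ell]} = \prod_{T[j,\ell]}$, yielding $Z_{jk} Z_{k\ell} = Z_{j\ell}$. Passing to logarithms turns this into the additive cocycle condition, which is exactly the tree-path additivity and hence automatic.

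The \textbf{integrality} claim for DAGs follows once I establish well-definedness: if $D$ is a DAG then by Proposition~\ref{proposition:multiplication} there are no cycles forcing $Z_{jk} = Z_{kj}^{-1}$ with a genuine inverse, and I can choose the spanning tree $T$ with edges oriented consistently so that every $\varepsilon(i,i')$ occurring in the relevant paths is $+1$; alternatively, I would argue that in a DAG one can orient computations so no negative exponents appear, making $Z_{jk}$ a product of positive integers $W(i,i') \in \mathbb{Z}_+$. I expect this step to require a short argument that the acyclicity lets us avoid the reciprocal $W(i,i')^{-1}$ terms, perhaps by restricting attention to the transitive closure where all arcs point "forward."

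The \textbf{converse} is where I expect the main obstacle. Given any nondegenerate solution $Z$ of \eqref{eq:multiplicative}, I would take logarithms to land in the additive setting and invoke Lemma~\ref{lemma:multiplication}: the solution space is $\ker M$, spanned by cycle-type relations. The key idea is that the tree-based assignment \eqref{eq:treeAssign} already exhibits a full-dimensional family of solutions parameterized by the free values $W$ on the $|E(T)| = |V(D)| - 1$ tree edges, and I would argue this matches the dimension $m = \dim \ker M$. The crux is a dimension count: I must show that the map sending tree-edge data $W$ to the resulting $Z$ surjects onto all nondegenerate solutions, equivalently that prescribing $Z$ on the spanning tree determines it everywhere via \eqref{eq:multiplicative}, and that this prescription is free. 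The forward direction (tree data determines $Z$) follows from connectivity plus \eqref{eq:multiplicative}; the freeness (any tree data is admissible) follows from the first part of the theorem. The subtle point is handling the orientation bookkeeping in $\varepsilon$ and confirming that nondegeneracy of $W$ corresponds exactly to nondegeneracy of $Z$ (no zero entries), so that taking logarithms is legitimate throughout.
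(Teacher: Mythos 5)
Your handling of the first claim is correct and is essentially the paper's own argument: $T[j,k]$ exists and is unique because $T$ is a spanning tree of the connected graph $U(D)$, and \eqref{eq:multiplicative} follows because concatenating $T[j,k]$ with $T[k,\ell]$ and cancelling backtracked edges yields $T[j,\ell]$, the cancellation being multiplicatively legitimate since reversing an edge inverts its factor under the $\varepsilon$ convention.

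The other two claims are where your proposal has genuine gaps. For the DAG/integrality claim you give no argument: you propose to \emph{choose} the spanning tree so that only $+1$ exponents occur, but $T$ is part of the given data of the theorem (an arbitrary spanning tree of $U(D)$), so proving the claim for a favorable $T$ proves a different statement; your fallback ("restrict attention to the transitive closure where all arcs point forward") is a hope, not a proof. The paper's proof of this step is the one-line assertion that $\varepsilon \equiv 1$ when $D$ is a DAG, i.e.\ that the relevant tree paths never traverse an edge backwards; that assertion is precisely the point at issue and it is delicate. Indeed, for the DAG with $A(D)=\{(1,2),(1,3),(2,3)\}$ and $T=\{\{1,2\},\{1,3\}\}$, the tree path from $2$ to $3$ gives $Z_{23} = W(1,2)^{-1}W(1,3)$, so a negative exponent genuinely occurs on an arc of $\langle D \rangle$ (and $Z_{23}\notin\mathbb{Z}_+$ for, say, $W(1,2)=2$, $W(1,3)=3$). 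Any complete treatment must confront this; yours does not, and neither does the strategy of picking $T$ (for $A(D)=\{(1,3),(2,3),(2,4),(1,4)\}$ every spanning tree of $U(D)$ produces exactly one inverted factor on some arc of $D$).

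For the converse, the step that fails is the dimension count. You assert that the tree parameterization has $|E(T)| = |V(D)|-1$ free parameters and that this "matches" $m = \dim\ker M$ from Lemma \ref{lemma:multiplication}. No such equality holds in general: $m = |\tilde A(\langle D \rangle)| - \operatorname{rank} M$. For the weak DAG $A(D) = \{(1,3),(2,3),(2,4),(1,4)\}$ there are no composable non-loop pairs, so $\Gamma_2(D) = \emptyset$ (note your appeal to Lemma \ref{lemma:multiplication} already breaks here, since it assumes $|\Gamma_2(D)|>0$), every nondegenerate assignment of the four arc values satisfies \eqref{eq:multiplicative} vacuously, and the solution space is $4$-dimensional in log coordinates, whereas the tree form \eqref{eq:treeAssign} has only $|V(D)|-1 = 3$ parameters and forces the extra relation $Z_{14} = Z_{13}Z_{23}^{-1}Z_{24}$. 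So your surjectivity claim --- that tree data reaches every nondegenerate solution --- fails exactly at this point, as does your other key assertion that "prescribing $Z$ on the spanning tree determines it everywhere via \eqref{eq:multiplicative}": when the composition relations are vacuous they determine nothing off the tree. The paper compresses this entire step into the phrase "by transitivity," i.e.\ propagation of tree values along composition relations; whatever one makes of that compression, your route replaces it with a dimension identity that is false, so the converse is not established.
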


\begin{proof}
The assignment \eqref{eq:treeAssign} is well-defined since $T[j,k]$ exists; it is unique since $D$ is weak. The assignment satisfies \eqref{eq:multiplicative} because the concatenation of $T[j,k]$ and $T[k,\ell]$ is $T[j,\ell]$. If $D$ is a DAG, then $\varepsilon \equiv 1$. Finally, by transitivity, any nondegenerate solution of \eqref{eq:multiplicative} has the form \eqref{eq:treeAssign}.
\end{proof}

\begin{theorem}
\label{theorem:characterization}
If $D$ is weak, any nondegenerate solution of \eqref{eq:multiplicative} has the form $Z_{jk} = p_j^{-1} p_k$ for some $p$. 
\footnote{
If $B$ is the incidence matrix of $D$, then $\log [\text{vec } Z] = B^T \log [p]$, where $\text{vec}$ stacks matrix columns and $p$ is taken as a vector.
} 
\end{theorem}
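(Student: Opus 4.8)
The plan is to reduce \eqref{eq:multiplicative} to an \emph{additive} cocycle condition and then exhibit the desired form as a coboundary, using the spanning-tree presentation already furnished by Theorem \ref{theorem:multiplication}. Since the solution is nondegenerate, every relevant $Z_{jk}$ is nonzero, so I may set $d_{jk} := \log Z_{jk}$ for $(j,k) \in A(\langle D \rangle)$, whereupon \eqref{eq:multiplicative} becomes $d_{jk} + d_{k\ell} = d_{j\ell}$. Specializing to the self-loops always present in $\langle D \rangle$ gives $d_{jj} = 0$, and taking $\ell = j$ yields $d_{jk} + d_{kj} = 0$ whenever both arcs exist; thus $d$ is an antisymmetric additive cocycle. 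The target $Z_{jk} = p_j^{-1} p_k$ is exactly the statement that $d_{jk} = q_k - q_j$ for $q := \log p$, i.e. that the closed $1$-cochain $d$ is exact, matching the incidence-matrix identity $\log[\operatorname{vec} Z] = B^T \log[p]$ in the footnote. Equivalently, in multiplicative terms, I must produce a potential $p$ realizing the tree product of \eqref{eq:treeAssign}.

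First I would fix a root $r$ of the spanning tree $T$ of $U(D)$ supplied by Theorem \ref{theorem:multiplication} and \emph{define} $p_j := \prod_{(i,i') \in T[r,j]} W(i,i')^{\varepsilon(i,i')}$, the product over the unique tree path from $r$ to $j$. This makes $p$ well-defined purely from weak connectivity, with no appeal to directed reachability, which is essential because $(r,j)$ need not itself be an arc of $\langle D \rangle$.

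Next I would verify $Z_{jk} = p_j^{-1} p_k$ for every arc $(j,k) \in A(\langle D \rangle)$ by comparing tree paths. By Theorem \ref{theorem:multiplication}, $Z_{jk} = \prod_{(i,i') \in T[j,k]} W(i,i')^{\varepsilon(i,i')}$. Writing $m$ for the meet (lowest common ancestor) of $j$ and $k$ in $T$, the paths factor as $T[r,j] = T[r,m]\,T[m,j]$ and $T[r,k] = T[r,m]\,T[m,k]$, while $T[j,k] = \overline{T[m,j]}\,T[m,k]$, with $\overline{\,\cdot\,}$ denoting reversal and juxtaposition denoting concatenation. Because $S$ is commutative, forming $p_j^{-1} p_k$ cancels the shared segment $T[r,m]$, leaving $p_j^{-1}p_k = \big(\prod_{T[m,j]} W^{\varepsilon}\big)^{-1} \prod_{T[m,k]} W^{\varepsilon}$; it then remains to identify the inverted first factor with the reversed branch $\overline{T[m,j]}$ appearing in $T[j,k]$.

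That identification is the step I expect to be the crux. It rests on the edgewise relation $W(i,i')^{\varepsilon(i,i')}\, W(i',i)^{\varepsilon(i',i)} = 1$, which holds because $W$ is a function of the undirected edge while $\varepsilon(i,i') = -\varepsilon(i',i)$ on every oriented edge not belonging to a $2$-cycle. Granting this, reversing a tree path inverts its $W^{\varepsilon}$-product, so $\big(\prod_{T[m,j]} W^{\varepsilon}\big)^{-1} = \prod_{\overline{T[m,j]}} W^{\varepsilon}$, and the two displayed products concatenate to $\prod_{T[j,k]} W^{\varepsilon} = Z_{jk}$, completing the argument. The one case needing separate care is a tree edge lying on a $2$-cycle, where both orientations are arcs; there Proposition \ref{proposition:multiplication} forces $Z_{i'i} = Z_{ii'}^{-1}$, so the antisymmetric relation is recovered once a consistent orientation convention for $\varepsilon$ is fixed on such edges, and the remainder of the proof goes through unchanged. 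This is the only nondegeneracy-sensitive point; everything else is formal manipulation of tree paths in a commutative ring.
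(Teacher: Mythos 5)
Your proof is correct and takes essentially the same route as the paper's: both fix a spanning tree of $U(D)$, define the potential $p$ as the tree-path product of $W^{\varepsilon}$ from a chosen root, and lean on Theorem \ref{theorem:multiplication} to identify $Z$ with the tree assignment \eqref{eq:treeAssign}, which then telescopes to $Z_{jk} = p_j^{-1}p_k$. The only difference is one of detail: you make explicit the telescoping steps (the meet/LCA decomposition, the edge-reversal relation $W(i,i')^{\varepsilon(i,i')}W(i',i)^{\varepsilon(i',i)}=1$, and the orientation convention needed on $2$-cycle edges) that the paper's terser proof --- which instead defines $p$ by solving $Z_{jk}=p_j^{-1}p_k$ along a spanning polytree and then recovers $Z$ from Theorem \ref{theorem:multiplication} --- leaves implicit.
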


\begin{proof}
Let $Z$ satisfy \eqref{eq:multiplicative} and let $P$ be a spanning polytree of $D$, i.e., a spanning digraph such that $T := U(P)$ is a tree. Pick $i \in V(D)$ and set $p_i = 1$, then extend $p$ to $V(P) = V(D)$ by traversing $P$ and solving $Z_{jk} = p_j^{-1} p_k$ on $A(P)$. Finally, take $W(j,k) := Z_{jk}^{\varepsilon(j,k)}$ on $T$ and apply Theorem \ref{theorem:multiplication} to recover $Z$.
\end{proof}

\begin{example}
\label{example:scalar}
Figure \ref{fig:scalarExample} shows an example that is essentially generic in light of the structural characterization of transitive digraphs in Proposition 2.3.1 of \cite{bang2008digraphs}. However, its magnitude is undefined.
\begin{figure}[htbp]
  \centering
  \includegraphics[trim = 50mm 90mm 40mm 85mm, clip, width=.49\textwidth,keepaspectratio]{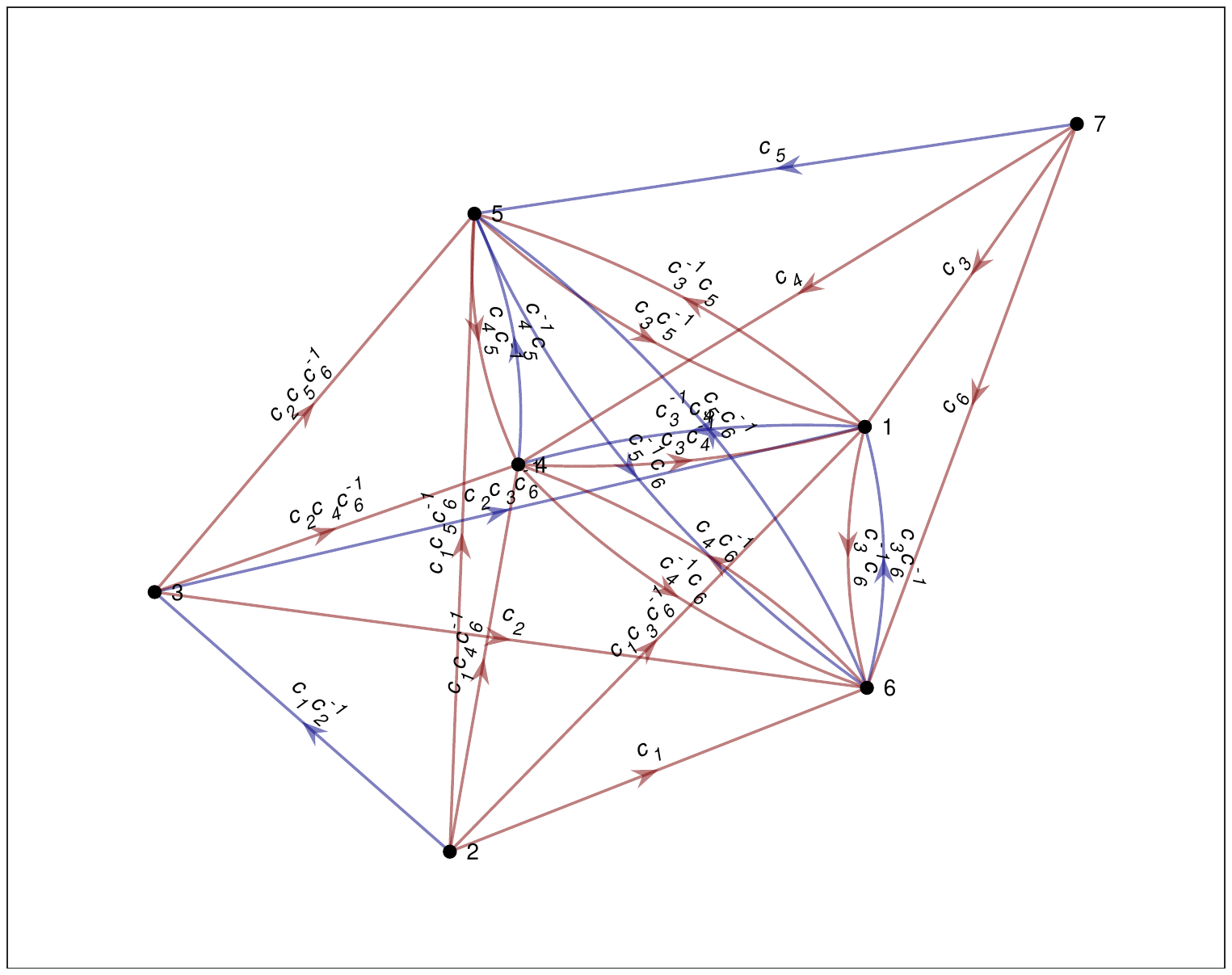} 
  \includegraphics[trim = 50mm 90mm 40mm 85mm, clip, width=.49\textwidth,keepaspectratio]{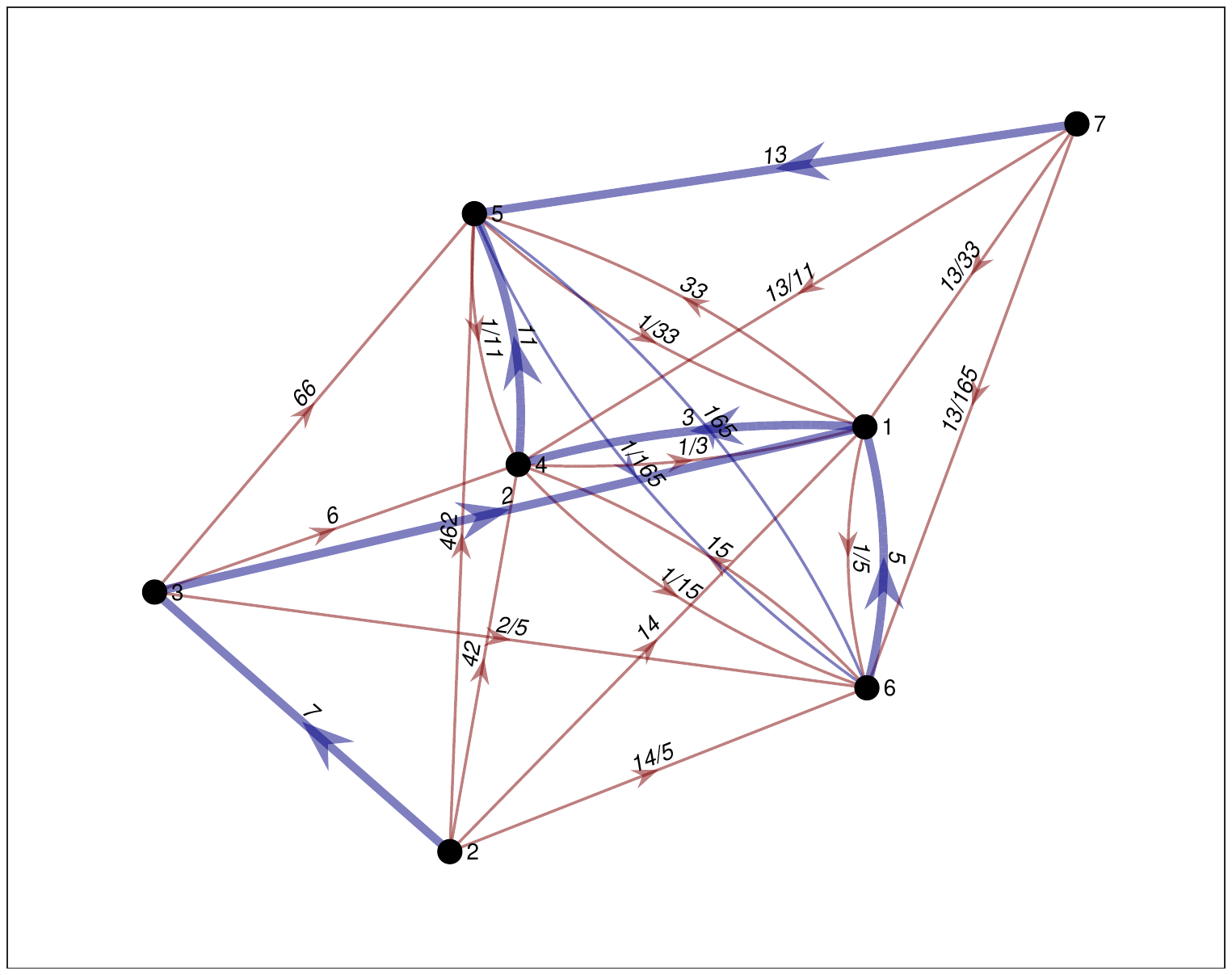} 
  \caption{\label{fig:scalarExample} (Left) A symbolic solution to \eqref{eq:multiplicative} of the form \eqref{eq:kerAssign} on {\color{red}a transitive digraph (red)} generated from {\color{blue}an underlying digraph (blue)}. (Right) A specific solution to \eqref{eq:multiplicative} generated using the assignment \eqref{eq:treeAssign} for the {\color{blue}{\bf highlighted spanning (poly)tree}}. Taking $p \propto (14,1,7,42,462,14/5,462/13)$ yields $Z_{jk} = p_j^{-1}p_k$.}
\end{figure}
\end{example}

The simplest solutions to \eqref{eq:multiplicative} fail to give interesting structure.

\begin{lemma} 
\label{lemma:ordinaryExistence} 
If $Z_{jk} = Z_{kj}^{-1}$ for all $(j,k)$, then $\mathbf{C}$ does not have well-defined magnitude unless $Z_{jk} \equiv 1$.
\end{lemma}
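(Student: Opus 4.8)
The plan is to reduce the hypothesis to the explicit rank-one normal form supplied by Theorem \ref{theorem:characterization} and then show that the weighting equation $Zw = 1$ is unsolvable unless $Z$ is the all-ones matrix. First I would observe that the hypothesis ``$Z_{jk} = Z_{kj}^{-1}$ for all $(j,k)$'' is not vacuous: for the right-hand side to make sense every entry $Z_{kj}$ must be a unit, so $Z$ is a full matrix of units, the solution is nondegenerate, and $\langle D \rangle$ is complete and hence weak. Theorem \ref{theorem:characterization} then licenses writing $Z_{jk} = p_j^{-1} p_k$ for some $p : V(D) \to S_\times$. Setting $D_p := \mathrm{diag}(p)$ and letting $J := \mathbf{1}\mathbf{1}^T$ be the all-ones matrix, this is exactly $Z = D_p^{-1} J D_p$, a matrix of rank one whose column space is the line spanned by the vector $u$ with entries $u_j = p_j^{-1}$.

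Next I would analyze the weighting equation directly from this factorization. Since $Z = u v^T$ with $u_j = p_j^{-1}$ and $v_k = p_k$, every product $Zw = \left(\sum_k p_k w_k\right) u$ is a scalar multiple of $u$. The weighting condition $Zw = 1$ (the column of ones) therefore forces $1 \in \mathrm{span}(u)$, i.e. the constant vector must be proportional to $(p_j^{-1})_j$. Equivalently $p_j = \sum_k p_k w_k$ must be independent of $j$, so all the $p_j$ coincide, which is precisely the case $Z_{jk} = p_j^{-1} p_k \equiv 1$. Consequently, whenever $Z \not\equiv 1$ there is no weighting; since a well-defined magnitude requires both a weighting and a coweighting, $\mathbf{C}$ has no magnitude.

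To see that the exceptional clause is sharp rather than decorative, I would note that when $Z = J$ the weighting equation collapses to the single scalar constraint $\sum_k w_k = 1$, which is solvable, and every such $w$ (and symmetrically every coweighting) sums to $1$; hence the magnitude is well-defined and equals $1$. The coweighting side needs no separate argument in the main case, because $Z^T = D_p J D_p^{-1}$ is again rank one and the absence of a weighting already defeats well-definedness.

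I expect the only genuine obstacle to be the bookkeeping at the first step: confirming that the hypothesis really entails nondegeneracy and weak connectedness so that Theorem \ref{theorem:characterization} applies, rather than the condition being interpreted only on arcs of $\langle D \rangle$. Once the normal form $Z = D_p^{-1} J D_p$ is in hand, the remainder is the elementary linear-algebra fact that the constant vector lies in a one-dimensional column space only in the degenerate constant-$p$ case.
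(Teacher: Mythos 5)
Your proof is correct, but it is scaffolded differently from the paper's. The paper argues directly from the composition law: for any pair $(j,k)$ and every $\ell$, $Z_{j\ell} = Z_{jk} Z_{k\ell}$, so row $j$ of $Z$ is $Z_{jk}$ times row $k$; pairing both rows against a putative weighting $w$ with $Zw = 1$ gives $1 = Z_{jk}\cdot 1$, forcing $Z_{jk}=1$. That is a two-line, purely local argument that never invokes Theorem \ref{theorem:characterization}, and hence never needs the bookkeeping you (rightly) flag as the delicate step --- verifying nondegeneracy and weak connectedness so that the normal form $Z_{jk} = p_j^{-1}p_k$ is available. Your route instead passes through that characterization to get the global rank-one factorization $Z = uv^T$ and then runs the same linear-algebra endgame (the all-ones vector can lie in a one-dimensional column space only when $u$ is constant). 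What your version buys: a clean global picture, and an explicit verification that the exceptional case $Z \equiv 1$ really does have magnitude (equal to $1$), which the paper leaves implicit. What the paper's version buys: it is more elementary, and because it is pairwise rather than global it localizes --- this is exactly why the paper can immediately extend ``similar considerations'' to the remark following the lemma, that any $\mathbf{C}$ with magnitude whose digraph contains a non-loop cycle has magnitude $1$; your argument, which needs the hypothesis $Z_{jk}=Z_{kj}^{-1}$ to hold for all pairs in order to invoke the characterization theorem, does not specialize to that situation as directly.
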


\begin{proof}
By hypothesis $Z_{j\ell} = Z_{jk} Z_{k\ell} = Z_{kj}^{-1} Z_{k\ell}$ for all $\ell$. Since the $j$th and $k$th rows of $Z$ are constant multiples of each other, the equation $Zw = 1$ only has a solution if $Z_{jk} = 1$.
\end{proof}

Similar considerations also show that if $D$ has a cycle (that is not a loop) and $\mathbf{C}$ has magnitude, the magnitude must be unity. However, the space of weightings still encodes nontrivial information about $\mathbf{C}$.

\begin{example}
Consider a toy program that is constructed as follows. We generate a program ``skeleton'' using productions from the probabilistic context free grammar \cite{visnevski2007syntactic}
%\begin{equation}
%\label{eq:skeleton}
$$\texttt{S} \rightarrow \texttt{S; S} \ | \ \texttt{if b; S; fi} \ | \ \texttt{while b; S; end}$$
%\end{equation}
where $\texttt{S}$ is shorthand for a line separator: the production probabilities are respectively $0.6$, $0.1$, and $0.3$. The tokens $\texttt{S}$ and $\texttt{b}$ respectively represent statements/subroutines and Boolean predicates. 

Next, we form the resulting \emph{control flow graph} \cite{cooper2011engineering} by associating vertices with lines in the skeleton and edges according to Table \ref{tab:CFG}. We also prepend a \texttt{START} line/vertex/arc and append a \texttt{HALT} line/vertex/arc to both the program and the control flow graph.
\begin{table}
  \centering
  \caption{Control flow graph arc: $[\cdot] := $ line number of matching token.}
  \label{tab:CFG}
  \begin{tabular}{ccl}
    %\toprule
    source at line $j$& target($\top$)&target($\bot$)\\
    \hline
    %\midrule
    \texttt{if b} & $j+1$& [\texttt{fi}]+1 \\
    \texttt{while b} & $j+1$& [\texttt{end}]+1 \\
    \texttt{end} & [\texttt{while}]& $\cdot$ \\
    \texttt{fi} or \texttt{S} & $j+1$& $\cdot$ \\
  %\bottomrule
\end{tabular}
\end{table}
We can explicitly instantiate an executable program by i) replacing a token \texttt{S} on line $j$ of the program by an explicit statement %$x \leftarrow S_j x$, where $S_j \in M_n(\mathbb{R})$; % = Q \Lambda Q^{-1}$, $\Lambda$ is a diagonal matrix of eigenvalues, and $Q \sim \mathcal{U}(O(n))$ is an orthogonal matrix of dimension $n$ sampled uniformly at random;
%\footnote{
%This is easily done in MATLAB using the command \texttt{gallery('qmult',N)}. % \cite{stewart1980efficient}
%}
and ii) replacing a token \texttt{b} on line $k$ of the program by an explicit predicate. 
% $b_k^T x > 0$, where $b_k \sim \mathcal{U}(S^{n-1})$ is a unit vector of dimension $n$ sampled uniformly at random.
%\footnote{
%The number of productions from \eqref{eq:skeleton} and the particular matrices are specified in the context of a particular instantiation.
%}

Consider the following assignment of scalar data to arcs of the control flow graph:
\begin{itemize}
	\item An arc of the form $(\texttt{S}_j,\cdot_{j+1})$ is assigned $\delta_j \in S$;
	\item All other arcs not of the form $(\texttt{if},\cdot_{[\texttt{fi}]+1})$ or $(\texttt{end},\texttt{while}_{[\texttt{while}]})$ are assigned $1 \in S$;
	\item An arc of the form $(\texttt{if},\cdot_{[\texttt{fi}]+1})$ is assigned the (ordered) product of data assigned to any other path between its source and target;
	\item An arc of the form $(\texttt{end},\texttt{while}_{[\texttt{while}]})$ is assigned the inverse of the (ordered) product of data assigned to any other path between its target and source.

\end{itemize}

\begin{proposition}
\label{proposition:partialAssignment} 
The assignment above is well defined and uniquely corresponds to an $S$-category. \qed
\end{proposition}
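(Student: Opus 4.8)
The plan is to show that the partial assignment on the generating arcs $A(D)$ of the control flow graph extends, in exactly one way, to a similarity matrix $Z$ on $A(\langle D \rangle)$ satisfying \eqref{eq:multiplicative}; by the equivalence recorded immediately after \eqref{eq:multiplicative}, this is the same as exhibiting a unique $S$-category $\mathbf{C}$ with $Z_{jk} = \mathbf{C}(j,k)$. The organizing observation is that the grammar produces a \emph{structured} (reducible) flow graph in which every \texttt{if} and every \texttt{while} production opens and closes a well-nested single-entry/single-exit block. Concretely, the fall-through arcs $(\cdot_j,\cdot_{j+1})$ together with the loop-exit arcs $(\texttt{while b},\cdot_{[\texttt{end}]+1})$ constitute a spanning tree $T$ of $U(D)$, and the only non-tree edges are exactly the skip arcs $(\texttt{if},\cdot_{[\texttt{fi}]+1})$ and the back arcs $(\texttt{end},\texttt{while}_{[\texttt{while}]})$: each conditional and each loop contributes precisely one extra edge and hence one fundamental cycle in $U(D)$. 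This lets me invoke Theorem \ref{theorem:multiplication} to parametrize solutions by the tree values $W$ on $E(T)$, with rules 1--2 fixing $W$ (the free parameters being exactly the $\delta_j$, all other tree edges carrying $1$), so that the entire problem reduces to determining the two non-tree values.

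The well-definedness of rules 3 and 4 I would establish by structural induction on the parse tree of the skeleton, simultaneously proving that the ordered product of $Z$ along any directed path between two vertices of a block depends only on its endpoints. In the sequential (straight-line) case there is a unique directed path and path-independence is immediate. For an \texttt{if}-block there are exactly two directed paths from $\texttt{if b}$ to the merge vertex $[\texttt{fi}]+1$: the skip arc and the route through the body. By the inductive hypothesis the body product is already path-independent, so ``the product of data assigned to any other path'' in rule 3 is unambiguous; since this fundamental cycle is a DAG diamond ($\varepsilon \equiv 1$), setting the skip arc equal to the body product is exactly the constraint \eqref{eq:multiplicative} demands, and it propagates path-independence one level up. For a \texttt{while}-block the back arc closes a directed cycle $\texttt{while b} \to \cdots \to \texttt{end} \to \texttt{while b}$; by Proposition \ref{proposition:multiplication} any solution must have unit product around it, and rule 4 assigns the back arc the inverse of the (inductively path-independent) forward product, the unique value meeting this constraint. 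Well-nesting guarantees that nested loops yield nested rather than interlocking cycles, so one back-arc value per loop satisfies all fundamental-cycle constraints at once.

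With well-definedness and path-independence established, existence follows by defining $Z_{jk}$ on each transitive-closure arc $(j,k) \in A(\langle D \rangle)$ to be the ordered product of the generating data along any directed $j \to k$ path in $D$ (and $Z_{jj}=1$ on the loops); path-independence makes this unambiguous and concatenation of paths yields \eqref{eq:multiplicative} directly. Uniqueness follows because, given the $\delta_j$, rules 1--2 force every tree value and rules 3--4 are forced by the two fundamental cycles, so $W$ and hence $Z$ are pinned down; \eqref{eq:multiplicative} then determines $Z$ on every remaining transitive arc as the product along a representative path, and the $S$-category is unique. Alternatively one may apply Theorem \ref{theorem:characterization} to recover a vertex potential with $Z_{jk} = p_j^{-1}p_k$, which fixes $Z$ up to the global scaling that \eqref{eq:multiplicative} cannot detect.

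I expect the main obstacle to be precisely the well-definedness of rules 3 and 4 under deep nesting: ``any other path'' must be shown to be genuinely path-independent \emph{before} the rule can be read as an assignment, and supplying this is the entire content of the structural induction. The \texttt{while}-case is the more delicate one, since loops are the only source of genuine directed cycles---placing this example in the non-DAG regime flagged after Lemma \ref{lemma:ordinaryExistence}---and one must verify that nested loops produce compatibly nested cycle constraints so that a single back-arc value per loop can satisfy them simultaneously.
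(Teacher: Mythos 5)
Your proposal is correct, but there is essentially nothing in the paper to compare it against: the proposition carries its \qed{} in the statement itself and no argument is given, the author evidently regarding the claim as immediate from the well-nested, single-entry/single-exit block structure that the grammar enforces. Your proof supplies exactly the content the paper leaves implicit, and it does so in a way that is consistent with the paper's own machinery: the observation that the fall-through and loop-exit arcs form a spanning tree of $U(D)$ reduces the example to the parametrization of Theorem \ref{theorem:multiplication}; the structural induction on the parse tree shows that rules 3--4 are unambiguous and forced (this is the genuine mathematical content, since ``any other path'' is only a valid assignment once path-independence is known at deeper nesting levels); and Proposition \ref{proposition:multiplication} identifies the back-arc value as the unique one compatible with the loop cycle. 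Two points to tighten. First, in the \texttt{if} case there are in general many directed paths through the body, not ``exactly two'' paths from source to merge; your inductive hypothesis is precisely what collapses them to a single value, so state the count as two path classes rather than two paths. Second, your closing alternative misattributes the scaling freedom: Theorem \ref{theorem:characterization} determines the potential $p$ only up to a global scalar, but $Z_{jk} = p_j^{-1} p_k$ is thereby fixed exactly --- and uniqueness of the $S$-category requires that it be fixed exactly, so the phrase ``up to the global scaling'' should be deleted. It would also strengthen the existence step to say explicitly that, because each back arc gives its loop cycle unit product, paths traversing a loop different numbers of times have equal products; this is what makes the transitive-closure definition of $Z$ unambiguous in the non-DAG regime and squares the example with the discussion following Lemma \ref{lemma:ordinaryExistence}.
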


Consider the control flow graph obtained with $20$ context free grammar productions shown in Figure \ref{fig:ControlFlowGraph} and the assignment $\delta_j = 2$ for all $j$. The resulting similarity matrix and its kernel are respectively shown in the left and right panels of Figure \ref{fig:Matrices}. The space of weightings is obtained by adding the vector $(1,0,\dots,0)^T$ to the kernel: this vector corresponds to the $\texttt{START}$ vertex. The space of coweightings is similar (not shown). The basis vectors in the kernel all sum to zero, so the magnitude is always unity.

\begin{figure}[htbp]
  \centering
  \includegraphics[trim = 50mm 90mm 30mm 90mm, clip, width=.5\textwidth,keepaspectratio]{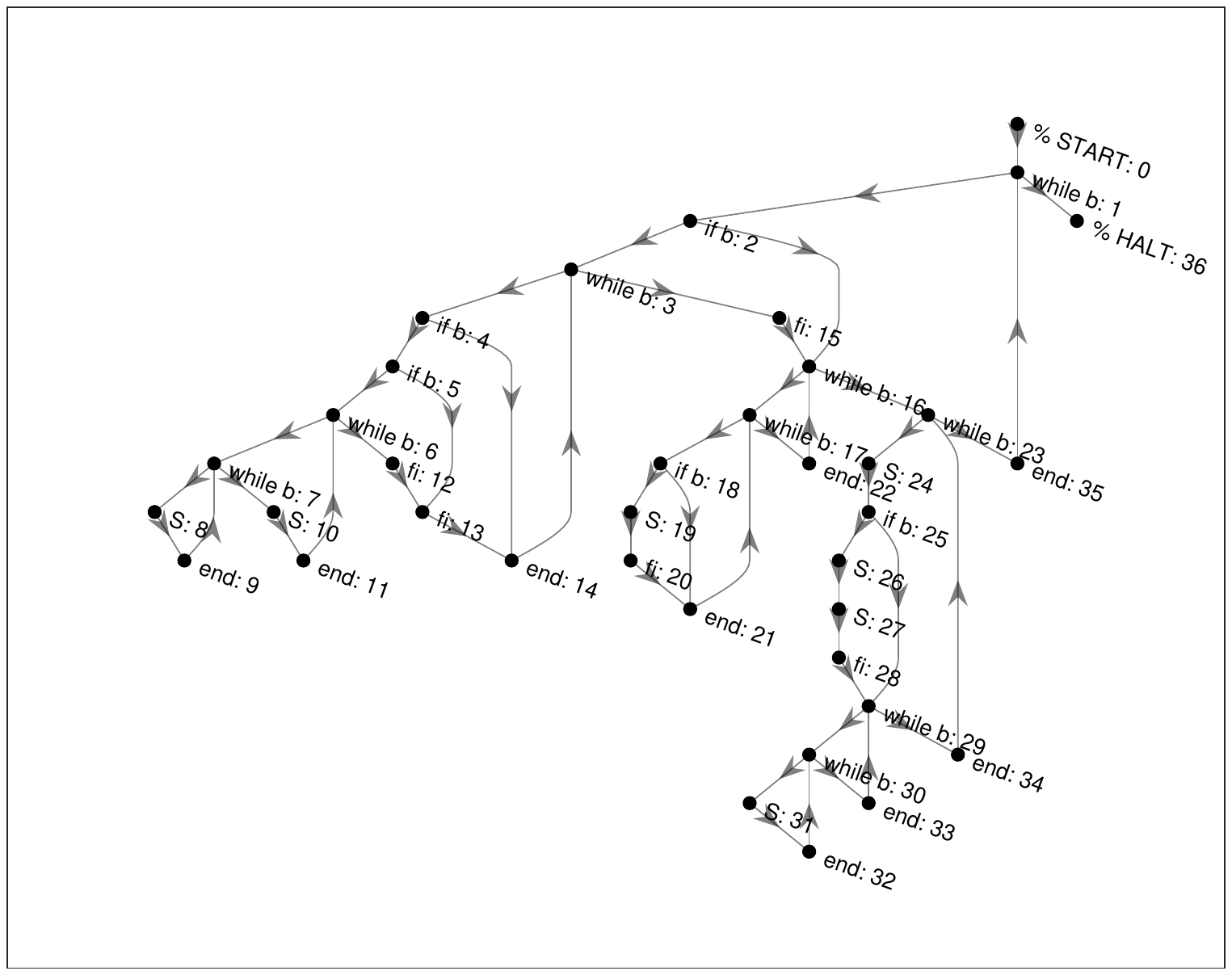} 
  \caption{\label{fig:ControlFlowGraph} A simple control flow graph.}
\end{figure}

\begin{figure}[htbp]
  \centering
  \includegraphics[trim = 20mm 65mm 20mm 70mm, clip, width=.49\textwidth,keepaspectratio]{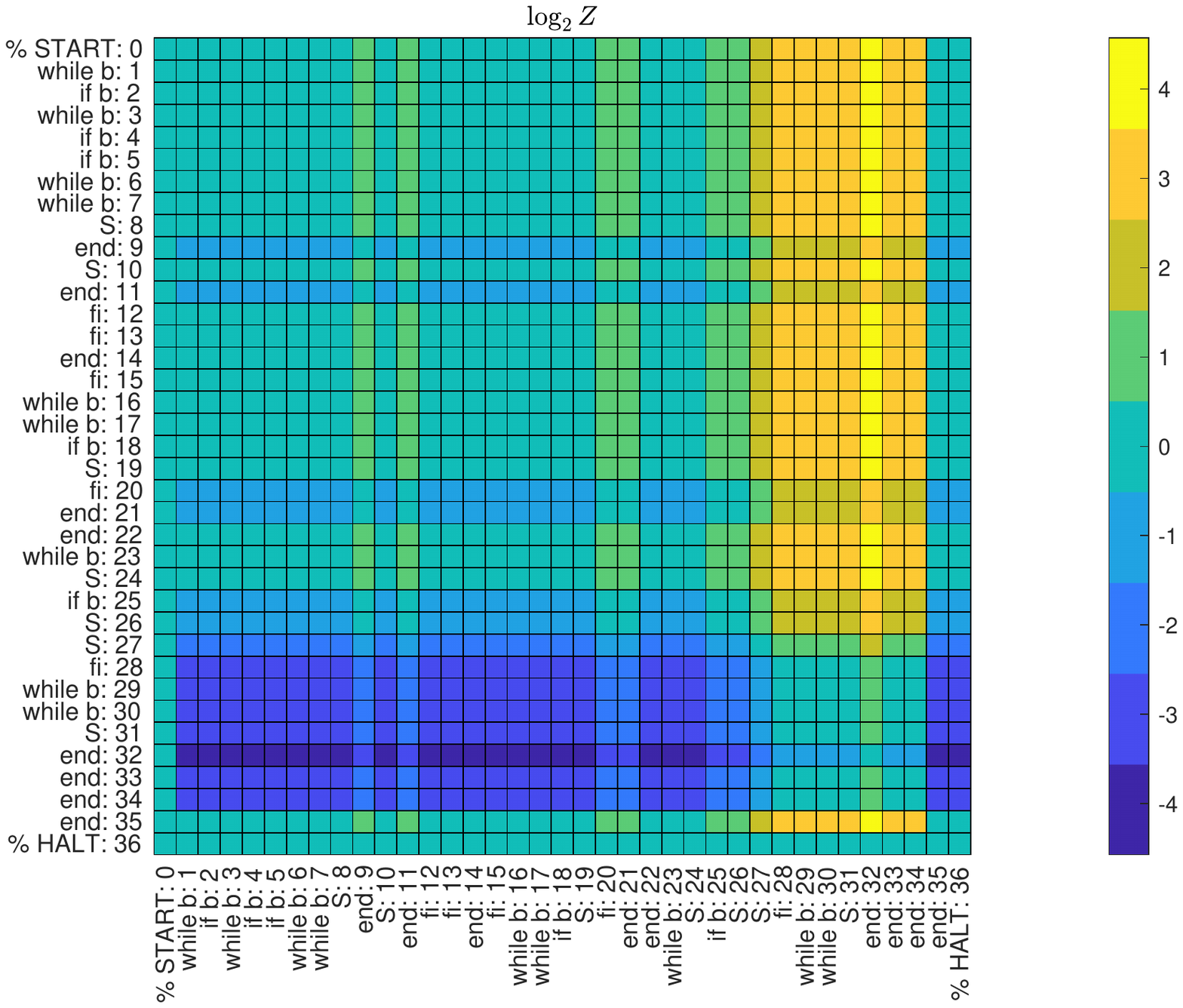} 
  \includegraphics[trim = 20mm 65mm 20mm 70mm, clip, width=.49\textwidth,keepaspectratio]{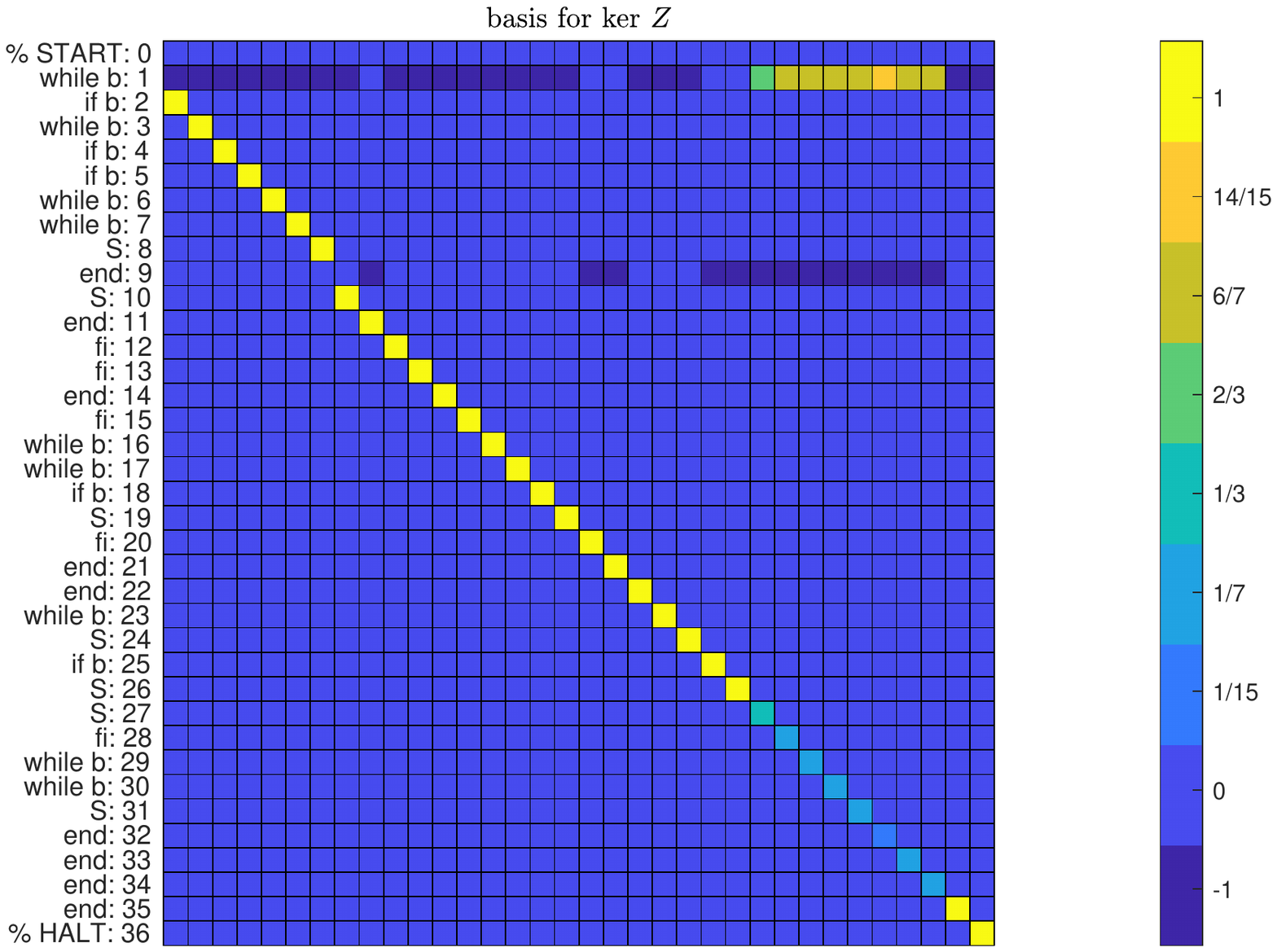} 
  \caption{\label{fig:Matrices} (Left) $\log_2$ of the similarity matrix $Z$ arising from the assignment $\delta_j = 2$ for all $j$ to the program with control flow graph in Figure \ref{fig:ControlFlowGraph}. (Right) A rational basis for the kernel of $Z$, with maximum absolute value of vector entries normalized to unity. Note that the color scheme is quantized/nonlinear.}
\end{figure}

Suppose now that we change one of the $\delta_j$ to equal $8$ instead of $2$. The (maxima of the rows of the) resulting kernels are shown in the left panels of Figure \ref{fig:Kernels}; the right panels are similar but with $\delta_j = 1/4$. Although the magnitude is always unity, the space of weightings encodes globally contextualized information about the program ``geometry.''

\begin{figure}[t]
  \centering
  \includegraphics[trim = 10mm 58mm 10mm 56mm, clip, width=.49\textwidth,keepaspectratio]{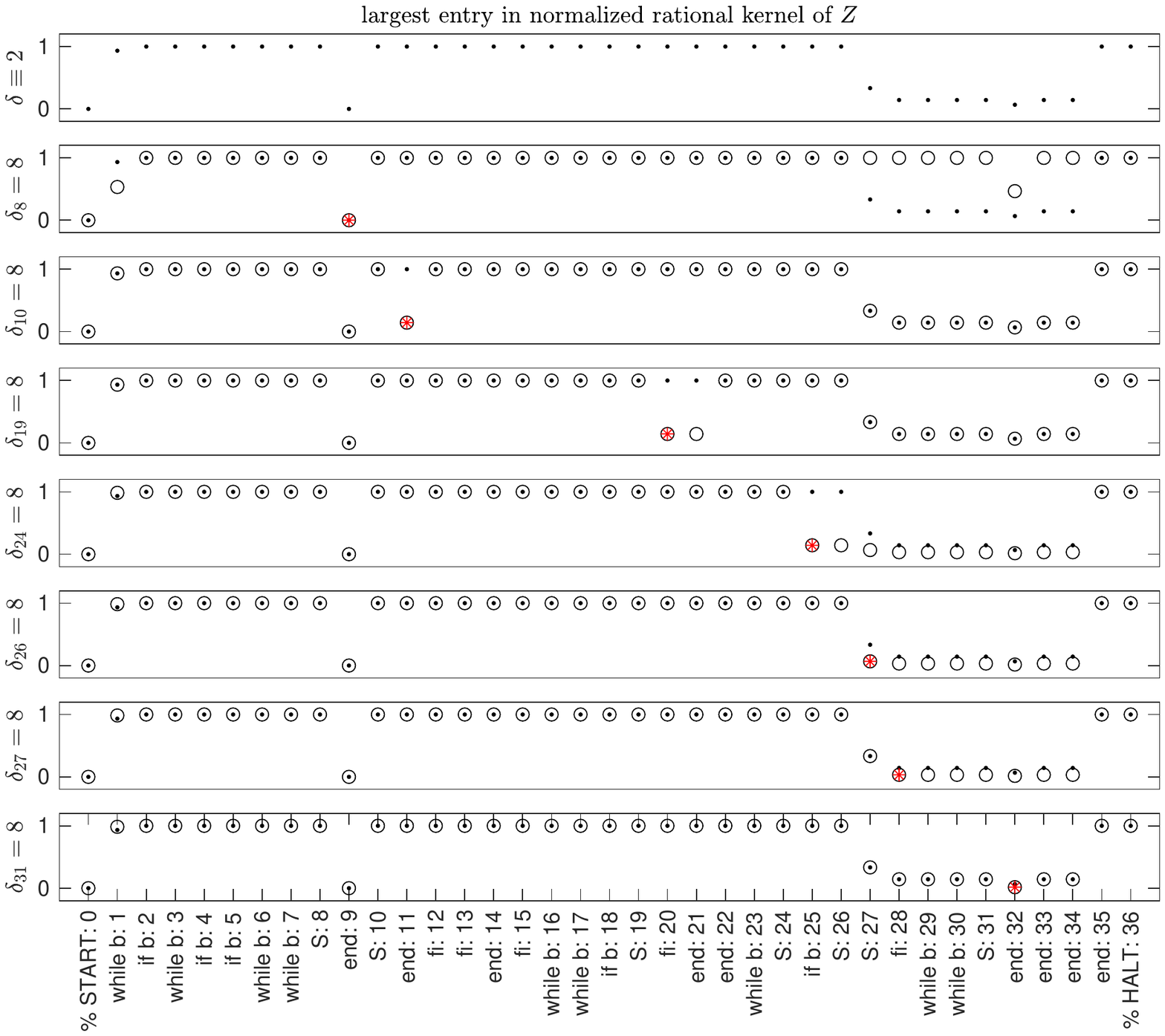} 
  \includegraphics[trim = 10mm 58mm 10mm 56mm, clip, width=.49\textwidth,keepaspectratio]{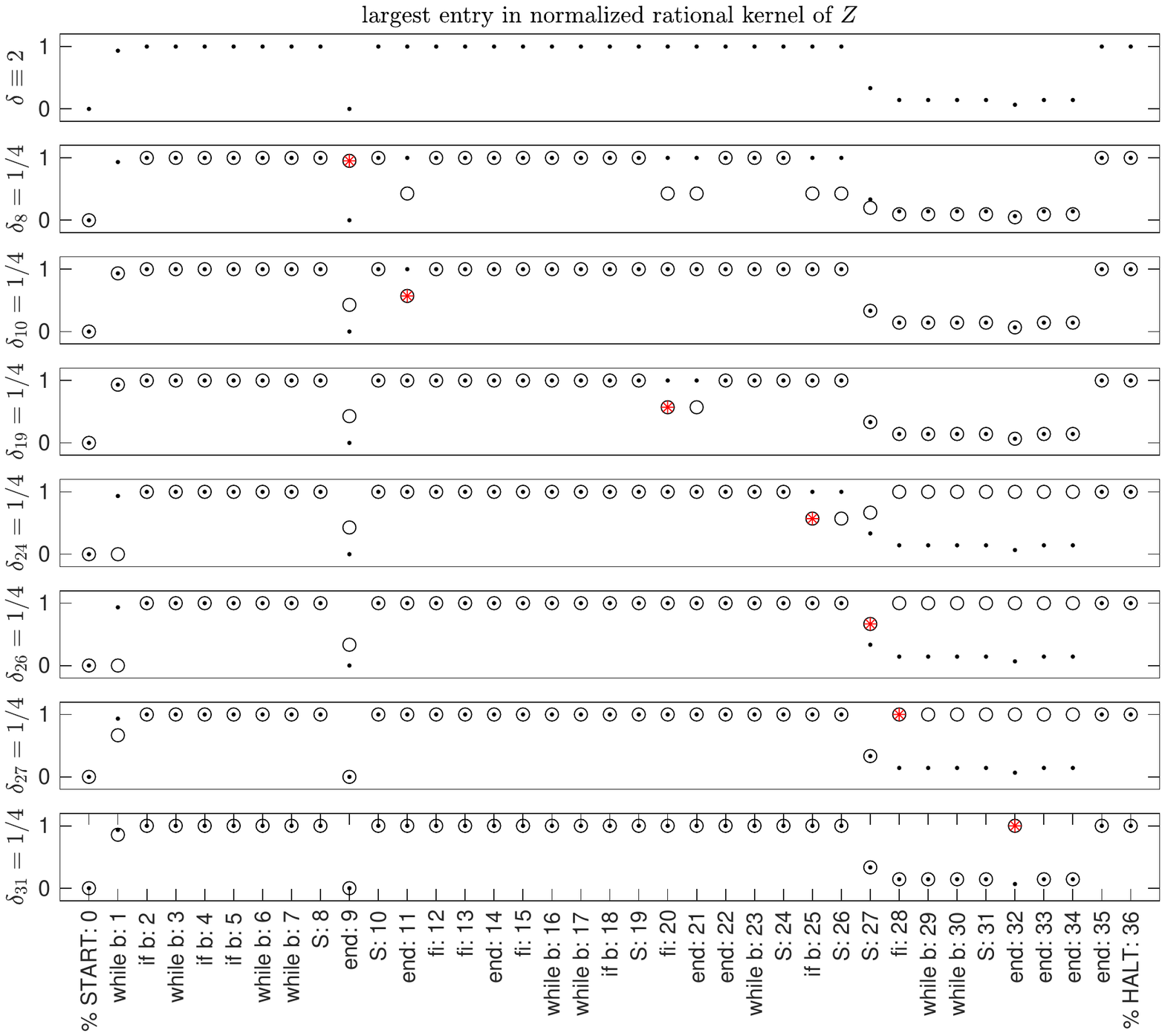} 
  \caption{\label{fig:Kernels} (Top left panel) Maxima of rows of the kernel of $Z$ from figure \ref{fig:Matrices}. (Lower left panels) As in the top panel, but also showing (with $\circ$) maxima for $\delta_j = 8$ with $j = 8, 10, 19, 24, 26, 27, 31$, respectively. The maximum for $j+1$ is shown with a {\color{red}red *}. % Except when $\delta_8 = 8$, this value is smaller than for $\delta \equiv 2$. 
(Right panels) As in the left panels, but taking $\delta_j = 1/4$.}
\end{figure}

%Finally, consider a case where there are no $\texttt{while}/\texttt{end}$ loops, say by shifting any production probability to $\texttt{if}/\texttt{fi}$ statements. In this case the values of the $\delta_j$ become immaterial and the control flow graph determines the kernel (not shown).

% Change production probability to [.6,.4,0,0] and run without saving plots automatically--save kernel manually
%
% Add one to argument for smooth2
% Change 10*abs... to 5*abs...
% axes(ha(end)); axis off;

\end{example}

If the sizes are positive, taking logarithms yields a dissimilarity that further essentially reduces the situation to a case of ordinary Lawvere metric magnitude for arc-weighted DAGs. In fact we can exploit this to gain some intuition for more general digraphs. Suppose all of the sizes on a spanning (poly)tree satisfy $\sigma = \exp(-1)$: then up to degeneracies related to reachability, $Z = \exp[-d]$ where $d$ is the digraph distance. By analogy with Euclidean distance matrices, we thus expect the resulting space of weightings to indicate vertices that are ``large'' by virtue of being ``peripheral'' in a way that sometimes but not always explicitly correlates to degree.

\begin{example}
Let $K^\rightarrow_{n_1,\dots,n_L}$ denote the DAG with $V(K^\rightarrow_{n_1,\dots,n_L}) := \bigsqcup_{\ell = 1}^L K_\ell$ for $K_\ell := [n_\ell]$ and $A(K^\rightarrow_{n_1,\dots,n_L}) := \{(v,v') : v \in K_\ell, v' \in K_{\ell+1}; \ell \in [L-1]\}$. This DAG corresponds to the architecture of a fully connected multilayer perceptron (MLP) with $L$ layers of widths $n_1, \dots, n_L$ \cite{goodfellow2016deep}.

Now consider a sub-DAG $D \subset K^\rightarrow_{n_1,\dots,n_L}$ with arc weights $\sim U([0,1])$ corresponding to a sparsely connected MLP. Fixing $D$ and retaining the arc weights corresponding to a random spanning tree of $U(D)$
\footnote{
We produce a random spanning tree by taking a minimal spanning tree of $U(D)$ augmented with temporary edge weights $\sim U([0,1])$. This spanning tree is generally not \emph{uniformly} random, but such trees can be produced \cite{aldous1990random}. 
} 
defines $Z$ and hence $w$ as a random variable, as shown for $N = 500$ realizations on the right of Figure \ref{fig:MLP} corresponding to the weighted DAG on the left.

\begin{figure}[htbp]
  \centering
  \includegraphics[trim = 30mm 107mm 35mm 110mm, clip, width=.49\textwidth,keepaspectratio]{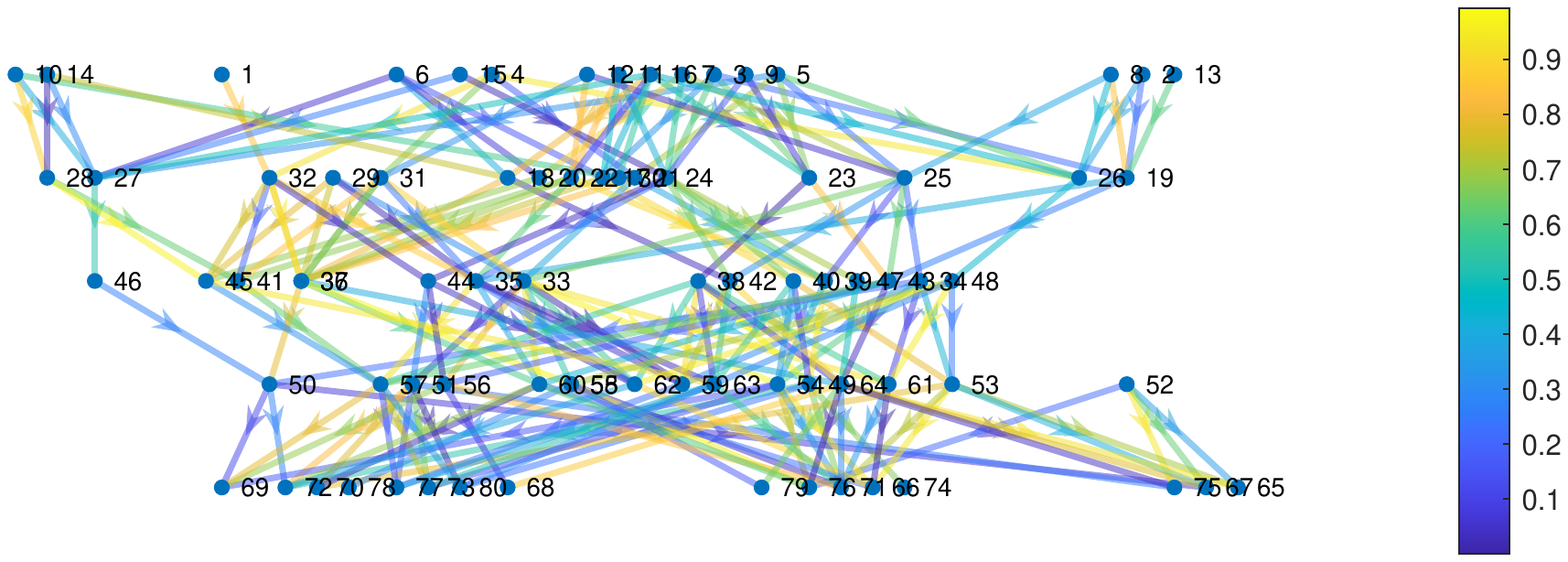} 
  \includegraphics[trim = 20mm 103mm 25mm 102mm, clip, width=.49\textwidth,keepaspectratio]{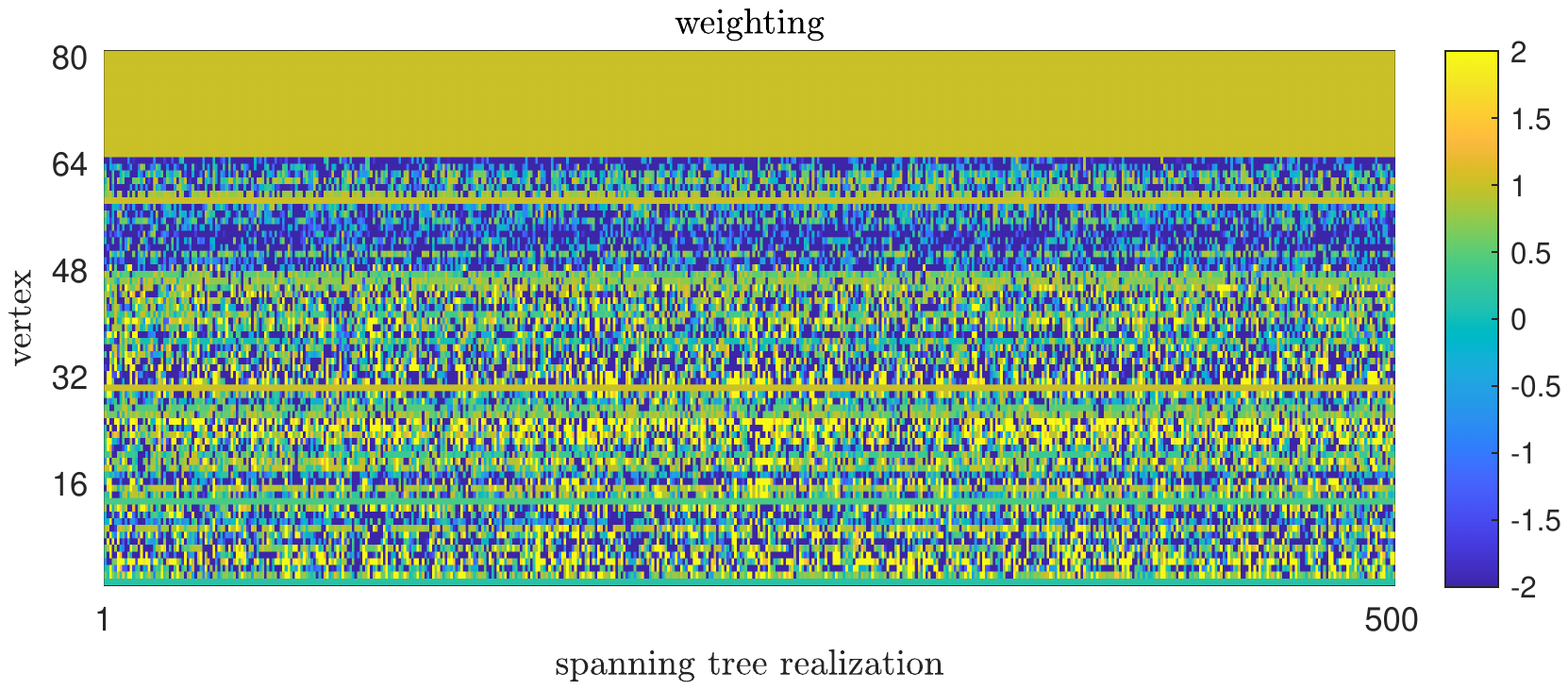} 
  \caption{\label{fig:MLP} (Left) A weighted sub-DAG of $K^\rightarrow_{16,16,16,16,16}$ with arc weights $\sim U([0,1])$ according to the same (rescaled) color scheme on the right. (Right) $N = 500$ realizations of the weighting of the $\mathbb{R}$-category obtained from generating data on a random spanning tree. The color axis has been truncated %(from $[-1.6 \cdot 10^9, 1.4 \cdot 10^8]$) 
for clarity. The statistical regularity of $w$ is visible as horizontal striping.}
\end{figure}

$w$ is statistically well-behaved: in our experiments, individual components of $w$ all satisfy the hypothesis of being sampled from a normal distribution according to the Anderson-Darling test \cite{razali2011power} with the best significance levels that are provided for in a standard computational implementation.
\footnote{
Multivariate normality tests along the lines of \cite{wang2015matlab} are computationally prohibitive and cannot be nearly as conclusive in our context because of the high dimension.
}
None of the components of $w$ are trivial except those corresponding to vertices with outdegree zero, which have unit values; components for vertices with indegree zero each have fixed values, and components for other vertices are normally distributed. Neighboring vertices tend to have weighting components of opposite signs, consistent with the general intuition in Euclidean space from \cite{willerton2009heuristic,bunch2020practical,huntsman2022diversity} that negative weighting components tend to occur ``just behind a local boundary'' with large positive weighting components. 

In larger networks, presumptive ``near-outlier'' vertices with the statistically least and greatest weighting components tend to be densely connected. Specifically, consider a measure of central tendency $\mathbb{T}$ (e.g., mean, median, etc.) and $t_- < t_+$ such that just a few vertices are in each of the sets $T_- := \{j : \mathbb{T}(w_j) < t_-\}$ and $T_+ := \{j : \mathbb{T}(w_j) > t_+\}$. Now writing $T := T_- \cup T_+$, consider the set $$X := T \cup \left ( \{i : (i,j) \in A(D); j \in T\} \cap \{k : (j',k) \in A(D); j' \in T\} \right ).$$ The sub-DAG induced by $X$ is generally densely connected, though this sub-DAG itself can be noisy. Figure \ref{fig:MLPoutliers} shows an example building on Figure \ref{fig:MLP}; we have observed this behavior across other examples.

\begin{figure}[htbp]
  \centering
  \includegraphics[trim = 30mm 115mm 35mm 112mm, clip, width=.7\textwidth,keepaspectratio]{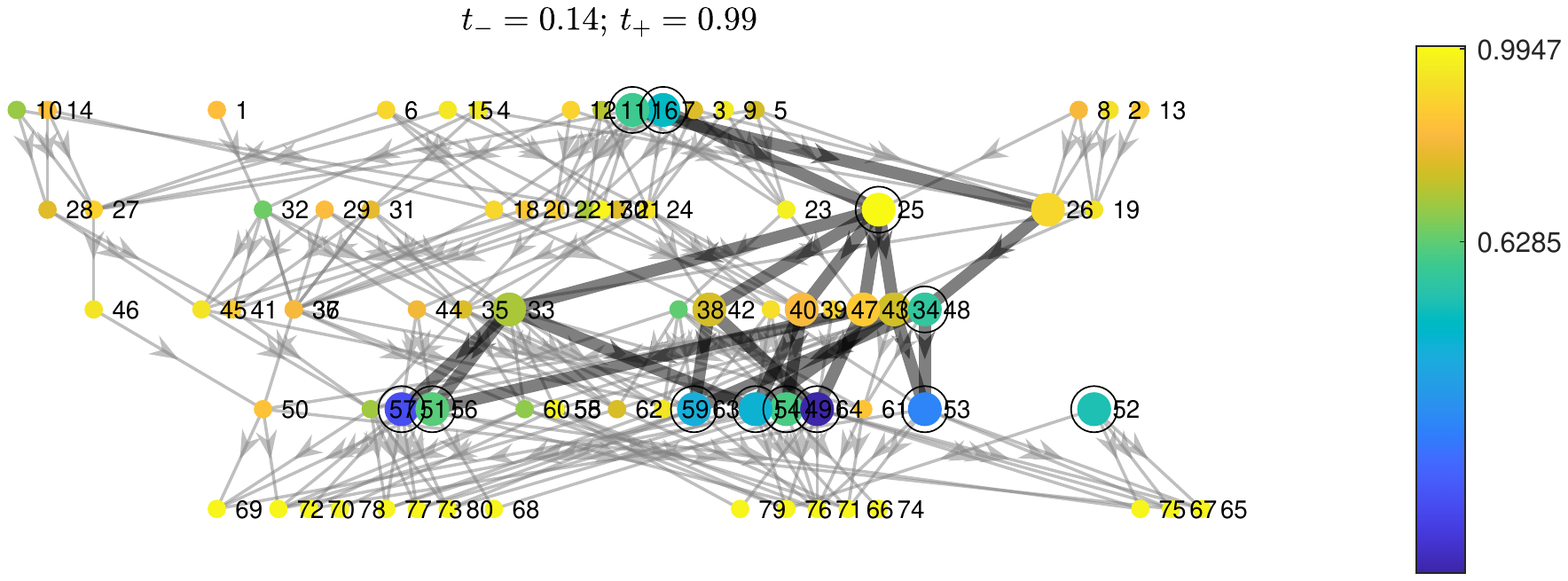} 
  \caption{\label{fig:MLPoutliers} The subgraph of the DAG from Figure \ref{fig:MLP} induced by $X$ for $\mathbb{T} = \text{median}$ and $(t_-,t_+) = (0.14,0.99)$. Vertices are colored by median weighting; $X$ and $T$ are respectively larger and circled.}
\end{figure}

Taken as a whole, these results suggest that enforcing such compositionality of weights in a regularization and/or pruning strategy % for ReLU activations
for neural networks might be useful in a way akin to dropout \cite{goodfellow2016deep}. %We leave this for future work.

The behavior described above (except for many disconnected components in the salient sub-polytree, due to obvious and otherwise irrelevant structure) manifests unambiguously when $D$ is a polytree. Figure \ref{fig:polytree} shows the weighting on a binary polytree with a realization of arc weights $\sim U(\{1,2\})$. 
\footnote{
It turns out that taking arc weights $\sim U([n])$ gives approximately the same result up to affine scaling for any $n \ge 2$ so long as the weights are obtained by quantizing the output of the same pseudorandom number generator.
}

\begin{figure}[htbp]
  \centering
  \includegraphics[trim = 10mm 77mm 5mm 73mm, clip, width=.75\textwidth,keepaspectratio]{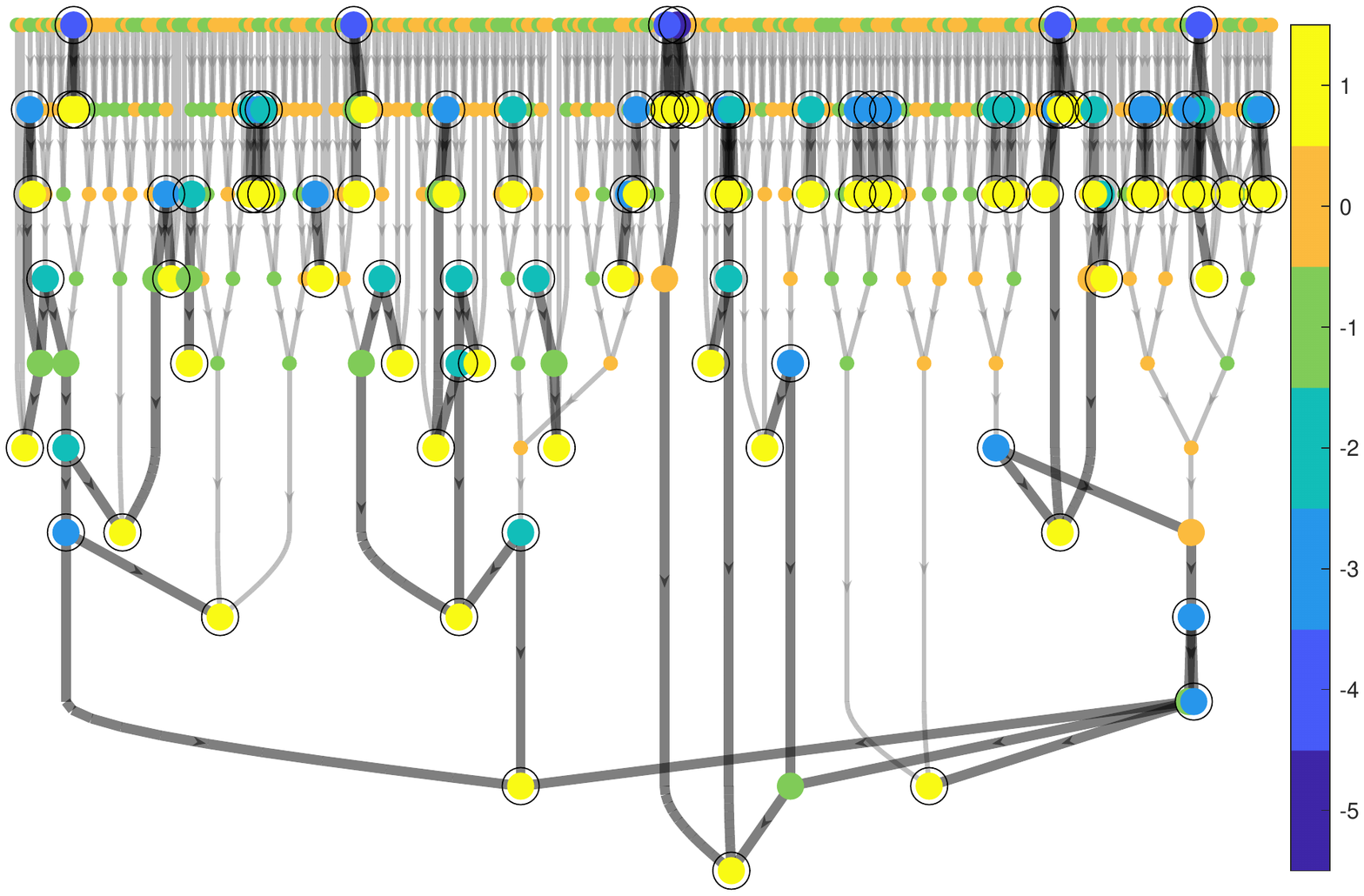} 
  \quad
  \includegraphics[trim = 80mm 70mm 85mm 70mm, clip, width=.18\textwidth,keepaspectratio]{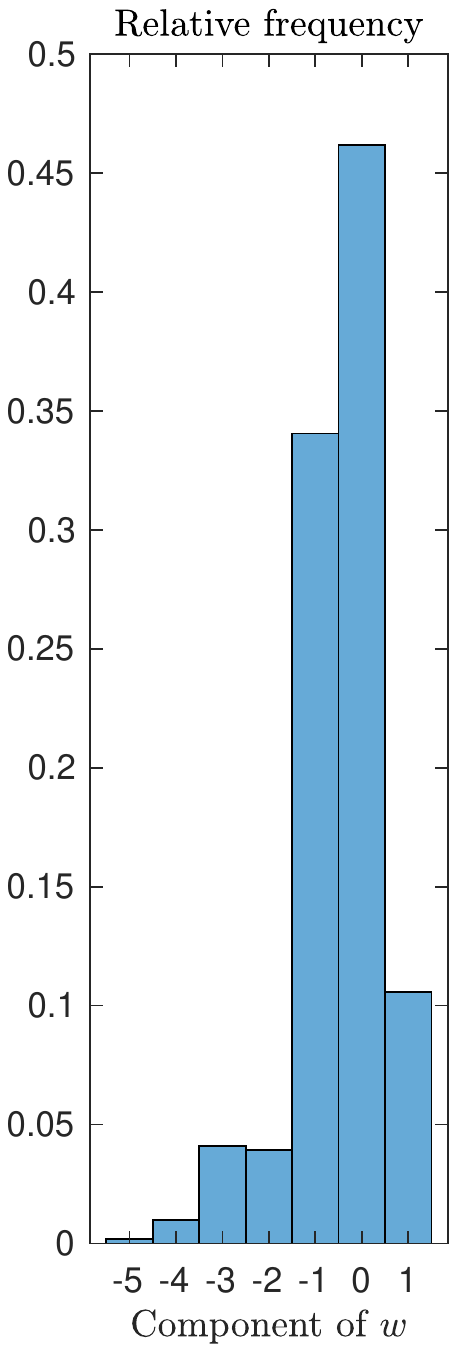} 
  \caption{\label{fig:polytree} (Left) As in Figure \ref{fig:MLPoutliers} for a polytree with arc weights $\sim U(\{1,2\})$ and $\{j: w_j < -1\}$ and $\{j: w_j > 0\}$ as respective analogues of $T_-$ and $T_+$. (Right) Relative frequencies of weighting components.}
\end{figure}

\end{example}

\section{\label{sec:Matrix}Matrix categories}

A \emph{matrix category} is an enriched category whose hom-sets are matrices. In particular, a matrix category is a representation of a digraph \emph{qua} quiver \cite{schiffler2014quiver} that satisfies a compositional coherence condition.
\footnote{
The category of quiver representations has a subcategory of digraph representations, and there is in turn a category of matrix categories of a digraph. The representation theory of these objects is probably interesting in its own right.
}
There are several inequivalent flavors of this construction, though from the perspective of magnitude these factor through the universal construction of \S \ref{sec:Scalar}.

\subsection{\label{sec:Matrix0}Matrix multiplication as monoidal product}

We first consider a model of a finite state machine in which linear maps are associated to the states and/or transitions. This model is applicable to many situations in control theory and/or the analysis of cyber-physical systems, e.g., switched linear systems \cite{liberzon2003switching}.

For $n < \infty$, we can treat the monoid $M_n(R)$ of $n \times n$ matrices over a commutative ring $R$ as a monoidal category with objects $M_n(R)$, only identity morphisms, and with ordinary matrix multiplication as the monoidal product. Suitably reinterpreted (in particular, taking ordered products), the assignment \eqref{eq:treeAssign} defines a $M_n(R)$-category, and any finite $M_n(R)$-category can be realized in this way: similarly, we can write $\mathbf{C}(j,k) = p_j^{-1} p_k$ for suitable $p$. Figure \ref{fig:matrixExample} shows examples for $SL_2(\mathbb{Z})$: these are convenient to write down,  though trivial from the perspective of magnitude.

\begin{figure}[htbp]
  \centering
  \includegraphics[trim = 50mm 90mm 40mm 85mm, clip, width=.49\textwidth,keepaspectratio]{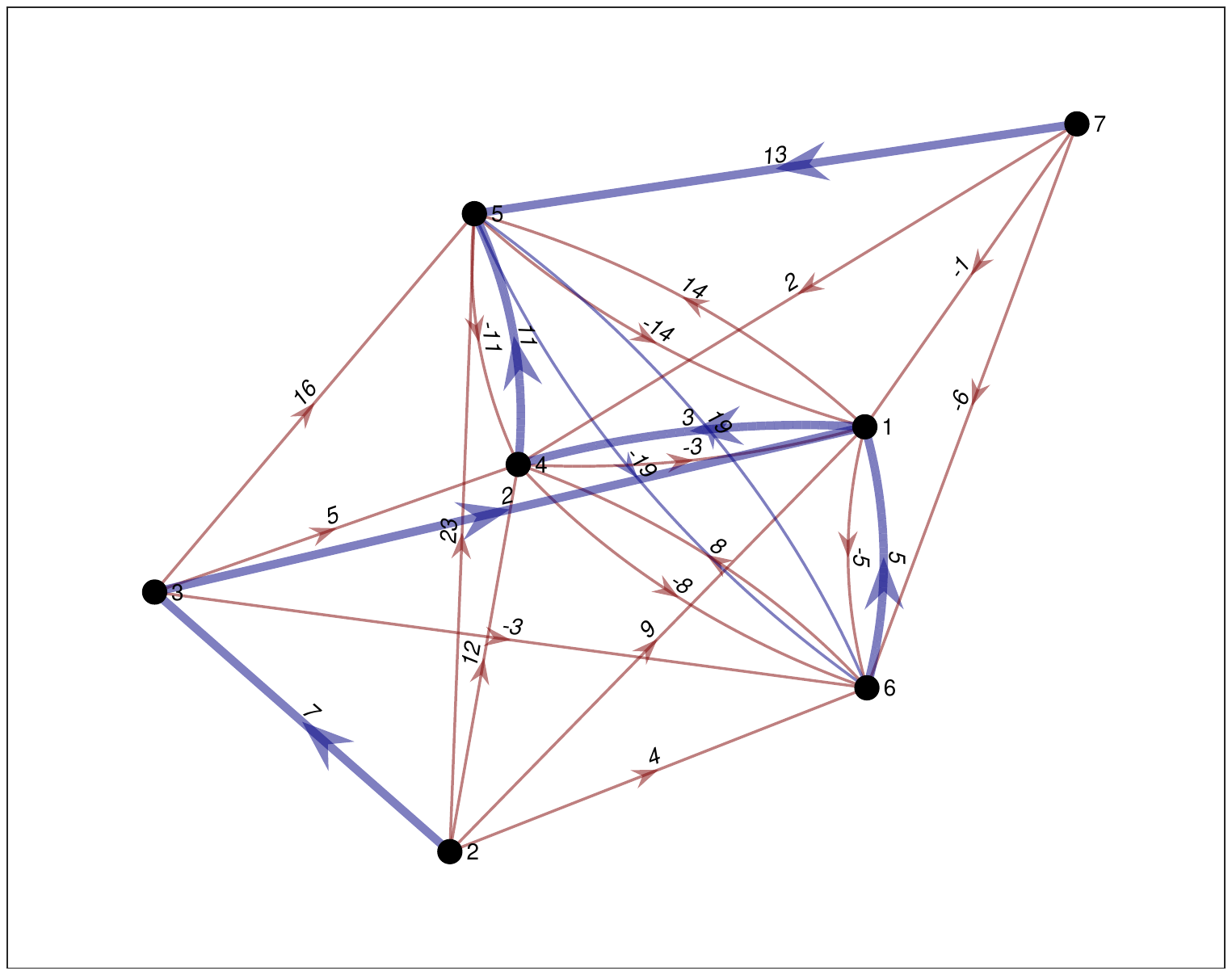} 
  \includegraphics[trim = 50mm 90mm 40mm 85mm, clip, width=.49\textwidth,keepaspectratio]{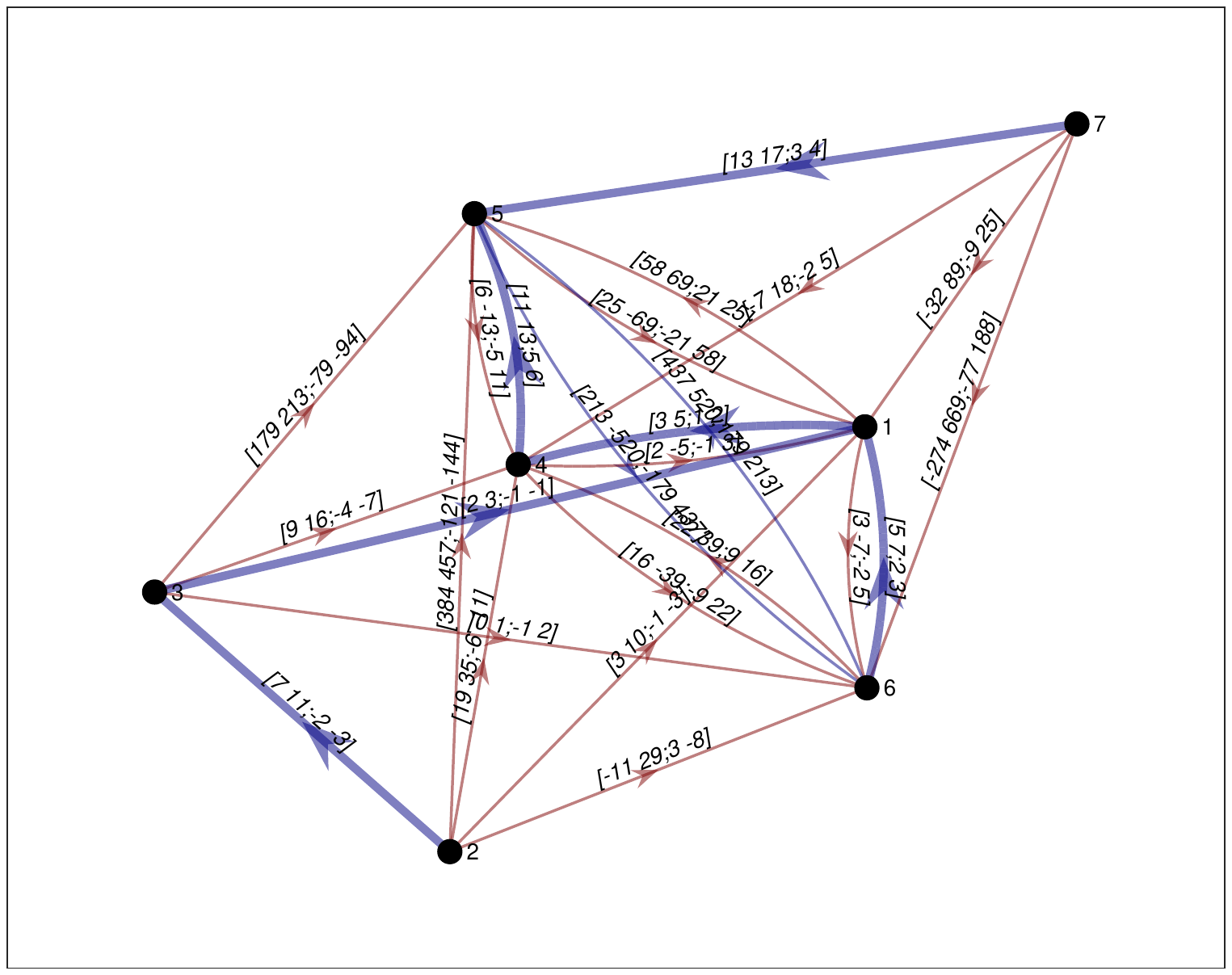} 
  \caption{\label{fig:matrixExample} (Left) As in the right panel of Figure \ref{fig:scalarExample} but labeled with the $(1,2)$ entries of matrices of the form $u(x) := \left ( \begin{smallmatrix}1 & x \\ 0 & 1\end{smallmatrix} \right )$. Note that this essentially uses scalar addition as monoidal product, and the matrices involved here all have unit determinant. Taking $p = u^{\times 7}(9,0,7,12,23,4,10)$ gives $\mathbf{C}(j,k) = p_j^{-1} p_k$.
%\footnote{
%If we take $\mathbf{M} = (\mathbb{R},+,0)$ and $\sigma^{(\beta)}(x) := \exp(-\beta x)$ for $\beta \in \mathbb{R}$, then a ``energy function'' $E: \text{Ob}(\mathbf{C}) \rightarrow \mathbb{R}$ allows us to define $\mathbf{C}(j,k) := E_k - E_j$. This yields $Z^{(\beta)}_{jk} = \exp(-\beta [E_k-E_j])$.
%}  
(Right) A more involved example over $SL_2(\mathbb{Z})$: here taking $p = \left (  \left ( \begin{smallmatrix}3 & 10 \\ -1 & -3\end{smallmatrix} \right ),  \left ( \begin{smallmatrix}1 & 0 \\ 0 & 1\end{smallmatrix} \right ),  \left ( \begin{smallmatrix}7 & 11 \\ -2 & -3\end{smallmatrix} \right ),  \left ( \begin{smallmatrix}19 & 35 \\ -6 & -11\end{smallmatrix} \right ), \left ( \begin{smallmatrix}384 & 457 \\ -121 & -144\end{smallmatrix} \right ), \left ( \begin{smallmatrix}-11 & 29 \\ 3 & -8\end{smallmatrix} \right ), \left ( \begin{smallmatrix}165 & -587 \\ -52 & 185\end{smallmatrix} \right ) \right)$ gives $\mathbf{C}(j,k) = p_j^{-1} p_k$.}
\end{figure}

Meanwhile, the determinant furnishes a suitable size map
% https://math.stackexchange.com/questions/1863655
% https://mathoverflow.net/questions/70889
% https://mathoverflow.net/questions/290988
and so we can apply the constructions of \S \ref{sec:magnitude}.
\footnote{\label{foot:determinant}
More generally, any well behaved size map factors through the determinant. Note also that determinants similarly inform enrichment over $\text{Arr}(\mathbf{Field})$ or $\text{Arr}(\mathbf{Ring})$ in a way that borrows from the present section and \S \ref{sec:Matrix1}: in these cases, a size map is given by the norm of a field extension or an ideal \cite{janusz1996algebraic}.
}
As far as similarity matrices, weightings, and magnitude are concerned, we can replace matrices with their determinants and apply the results from \S \ref{sec:Scalar} without explicitly forming all of the data for a $M_n(R)$-category. That is, from this perspective we do not really gain anything by considering matrices \emph{versus} scalars: the case of $n > 1$ factors through the case $n = 1$.

\subsection{\label{sec:Matrix1}Kronecker product as monoidal product}

There is also a variant of \S \ref{sec:Matrix0} with the usual (i.e., Kronecker tensor) monoidal product. The most obvious possible application is to quantum circuits \cite{nielsen2010quantum}
\footnote{
Although from the perspective of magnitude our constructions trivialize for unitaries, we can consider the Hermitian matrices obtained via matrix logarithms. This requires considering instead the monoidal product defined by the so-called \emph{Kronecker sum} $H \boxplus H' := H \otimes I_{H'} + I_H \otimes H'$ that satisfies $\exp(H) \otimes \exp(H') = \exp(H \boxplus H')$, where we emphasize here that the matrix exponential $\exp(\cdot)$ is indicated instead of the componentwise exponential $\exp[\cdot]$.
}
although as we shall see considerations of magnitude once again factor through to the scalar case of \S \ref{sec:Scalar} and we postpone an example to \S \ref{sec:stoch}.

Recall that the \emph{arrow category} $\text{Arr}(\mathbf{X})$ of a category $\mathbf{X}$ has as objects the morphisms of $\mathbf{X}$; morphisms given by commutative squares in $\mathbf{X}$; and composition given by concatenation of these commutative squares. If $\mathbf{X}$ is monoidal, then so is $\text{Arr}(\mathbf{X})$.
\footnote{See, e.g., exercise 4 on p. 165 of \cite{mac2013categories} for a more general result.} 
In particular, for $S \in \{\mathbb{R},\mathbb{C}\}$, the objects of $\text{Arr}(\mathbf{FinVect}_S)$ are (linear maps that can be represented as) matrices over $S$, 
\footnote{
Some of our development applies to the category of semimodules over a semiring ($=$ rig), but essential parts require working over $\mathbf{FinVect}_S$ for $S \in \{\mathbb{R},\mathbb{C}\}$. 
}
and $\text{Arr}(\mathbf{FinVect}_S)$ is monoidal with respect to the usual tensor product $\otimes$, i.e., the enriched composition law is
\begin{equation}
\label{eq:enrichedComposition1}
\mathbf{C}(j,k) \otimes \mathbf{C}(k,\ell) = \mathbf{C}(j,\ell).
\end{equation}

We proceed to construct the space of $\text{Arr}(\mathbf{FinVect}_S)$-categories $\mathbf{C}$ with underlying category determined by a finite digraph $D = (V,A)$. As in \S \ref{sec:Matrix0}, the hom-objects of $\mathbf{C}$ are linear maps, but since here the monoidal product is the usual tensor product, the sizes of these matrices vary unless they are all $1 \times 1$. Write $s_{jk} := \dim \text{dom } \mathbf{C}(j,k)$ and $t_{jk} := \dim \text{cod } \mathbf{C}(j,k)$. Then $s$ and $t$ must satisfy \eqref{eq:multiplicative}.

If $(j,k)$ and $(k,\ell)$ are part of a cycle, then so is $(j,\ell)$, which yields a simple proposition.
\begin{proposition} 
\label{proposition:acyclic} 
The system \eqref{eq:multiplicative} does not admit solutions over $\mathbb{Z}_+$ that are nontrivial on arcs that can reach a strong component of $D$. \qed
\end{proposition}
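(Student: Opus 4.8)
The plan is to read the hypothesis that $(j,k)$ ``can reach a strong component'' in the only sense that makes \eqref{eq:multiplicative} bite: the arc lies on a directed cycle of $\langle D \rangle$, i.e. $j$ and $k$ belong to a common nontrivial strong component $C$, so that the component is reached from the arc \emph{and} returns to it. Since \eqref{eq:multiplicative} here constrains the dimension functions $s$ and $t$, I want to show $s_{jk} = t_{jk} = 1$ for every such arc, forcing $\mathbf{C}(j,k)$ to be $1 \times 1$.

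First I would use that $\langle D \rangle$ is transitively closed. If $j, k \in C$ then $k$ reaches $j$ in $D$, so the reverse arc $(k,j)$ also lies in $\tilde A(\langle D \rangle)$ and $(j,k),(k,j)$ closes a $2$-cycle. By Proposition~\ref{proposition:multiplication}, or directly from \eqref{eq:multiplicative} on the triple $(j,k,j)$, this gives $Z_{jk} Z_{kj} = Z_{jj} = 1$; over $\mathbb{Z}_+$ a product of positive integers equal to $1$ forces each factor to equal $1$, so $Z_{jk} = 1$. Running the identical computation with $Z$ replaced by $s$ and then by $t$ — each satisfies \eqref{eq:multiplicative}, hence the same telescoping cycle identity — yields $s_{jk} = t_{jk} = 1$.

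I would then invoke the closure observation preceding the statement, that if $(j,k)$ and $(k,\ell)$ are part of a cycle then so is $(j,\ell)$. This shows the cycle-arcs form a set closed under the composition of $\langle D \rangle$, confirming that \emph{every} transitive-closure arc with both endpoints in a nontrivial strong component is of the form just handled, not merely the arcs of some chosen generating cycle. Hence nontrivial dimension data can survive only on the acyclic condensation obtained by collapsing each strong component to a point, which is exactly the sense in which $\mathbf{C}$ is ``acyclic.''

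The arithmetic is immediate once positivity is invoked, so I expect the main obstacle to be fixing the scope of the hypothesis rather than any computation: the product-equals-one identity bites only when a \emph{return} path through the strong component closes a cycle, and it is transitive closure that supplies the reverse arc. I would therefore make the hypothesis explicit in this bidirectional sense, and dispatch the one edge case, namely that each loop value is itself unity (immediate from $Z_{jj}^2 = Z_{jj}$ over $\mathbb{Z}_+$), since that is what makes the $2$-cycle computation land on $1$.
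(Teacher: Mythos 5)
Your proof is correct and is essentially the paper's own argument: the proposition carries no separate proof (it is stated with a terminal proof-box) precisely because it is meant to follow from the sentence immediately preceding it (arcs lying on cycles are closed under composition in $\langle D \rangle$) together with Proposition~\ref{proposition:multiplication}, whose cycle-product observation is exactly your computation $Z_{jk}Z_{kj} = Z_{jj} = 1$ over $\mathbb{Z}_+$, applied separately to the dimension data $s$ and $t$.

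Your worry about the scope of the hypothesis is also well-founded, and you resolved it the only defensible way. Read literally, ``no solutions nontrivial on arcs that can reach a strong component'' is false: take $D$ with arcs $a \to b$, $b \to c$, $c \to b$, so that $\{b,c\}$ is a nontrivial strong component reachable from the arc $(a,b)$; then $Z_{ab} = Z_{ac} = 2$ with $Z \equiv 1$ on all other arcs of $\langle D \rangle$ (loops included) satisfies every instance of \eqref{eq:multiplicative} over $\mathbb{Z}_+$ --- the only instances involving the value $2$ reduce to $Z_{ab}Z_{bc} = Z_{ac}$ and $Z_{ac}Z_{cb} = Z_{ab}$, which hold because $Z_{bc} = Z_{cb} = 1$ --- yet this solution is nontrivial on $(a,b)$. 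So what is actually provable, and what the paper's own justification (and yours) establishes, is the narrowed statement: triviality is forced on arcs lying \emph{within} a nontrivial strong component, equivalently on arcs of directed cycles. That weaker statement is also all the surrounding narrative requires, since it already shows genuinely matrix-valued hom-objects cannot live on cycles, which is what motivates the restriction to DAGs in Corollary~\ref{corollary:multiplicativeSolutions}.
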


Note that unless $D$ is a DAG, it has nontrivial strong components. Except in degenerate cases, we can perform ``Kronecker division'' of appropriately sized matrices, which leads to the following corollary.

\begin{corollary} 
\label{corollary:multiplicativeSolutions} 
If $D$ is a finite DAG, $P$ is a spanning polytree (i.e., a digraph whose image under $U$ is a tree) of $D$, and $s$, $t$ satisfy \eqref{eq:multiplicative} over $\mathbb{Z}_+$, an $\text{Arr}(\mathbf{FinVect}_S)$-category is specified by a nondegenerate element of $\prod_{(j,j') \in A(P)} M_{(s_{(j,j')},t_{(j,j')})}(S)$, and every $\text{Arr}(\mathbf{FinVect}_S)$-category with underlying category determined by $D$ arises in this way.
\end{corollary}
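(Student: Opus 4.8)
The plan is to treat this as the Kronecker analogue of Theorems~\ref{theorem:multiplication} and~\ref{theorem:characterization}: reduce the combinatorics to the scalar theory already established, and isolate the one genuinely new ingredient, namely Kronecker division. First I would dispatch the dimensions. Because the Kronecker product multiplies domains and codomains, $\dim \text{dom}$ and $\dim \text{cod}$ distribute over $\otimes$, so $s$ and $t$ each satisfy \eqref{eq:multiplicative}; since $D$ is a DAG, the DAG case of Theorem~\ref{theorem:multiplication} together with Proposition~\ref{proposition:multiplication} shows that any $\mathbb{Z}_+$-valued solution $s,t$ is determined by its values on $A(P)$ and stays in $\mathbb{Z}_+$. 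This fixes the shape $M_{(s_{(j,j')},t_{(j,j')})}(S)$ attached to each polytree arc and reduces the task to producing the matrix entries.

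For the forward direction I would fix a topological order on $V(D)$ and assign a nondegenerate matrix $W(j,j')$ to each arc of $P$. On the transitive closure $\langle P\rangle$ of the polytree I define $\mathbf{C}$ by ordered Kronecker products along the unique directed $P$-path, exactly as in \eqref{eq:treeAssign} but with $\otimes$ in place of scalar multiplication and factors arranged by the topological order; well-definedness is immediate from uniqueness of the tree path. The new content is the extension to arcs $(j,k)\in\langle D\rangle$ that are \emph{not} in $\langle P\rangle$: for such a pair I would take a directed $D$-path realizing the reachability, compare it against the corresponding $P$-routes, and solve the composition law \eqref{eq:enrichedComposition1} for $\mathbf{C}(j,k)$ by Kronecker division, i.e. by stripping off the already-known Kronecker factors. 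Nondegeneracy is exactly what guarantees the relevant factors are nonzero, so that division by a fixed Kronecker factor is both possible and unique.

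The crux — and the step I expect to be the main obstacle — is consistency: I must show the extended $\mathbf{C}$ satisfies \eqref{eq:enrichedComposition1} for \emph{every} composable triple, not merely those read off a single path. Here the scalar potential argument of Theorem~\ref{theorem:characterization} (automatic, since $S_\times$ is abelian and $Z_{jk}=p_j^{-1}p_k$ telescopes) has no direct analogue: $\otimes$ is noncommutative and Kronecker factorization is only unique up to a nonzero scalar, so ``diamond'' relations in $D$ (where two directed routes join) impose genuine constraints that must be shown to close up. I would reduce this to associativity of $\otimes$ together with the rigidity of Kronecker factorization — two matrices with the same nonzero Kronecker factor agree, and $A\otimes B=A'\otimes B'$ forces $A'=\lambda A$, $B'=\lambda^{-1}B$ — verifying that competing directed routes between the same endpoints differ only by cancelling factors after division; nondegeneracy keeps every factor nonzero, so the scalars are forced to agree.

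Finally, for completeness I would run the construction in reverse. Given any $\text{Arr}(\mathbf{FinVect}_S)$-category with underlying category $\langle D\rangle$, its hom-objects are nondegenerate and their restriction to $A(P)$ is an element of $\prod_{(j,j')\in A(P)}M_{(s_{(j,j')},t_{(j,j')})}(S)$. The category's own composition law guarantees that the reconstruction above returns $\mathbf{C}$ unchanged, and since the non-polytree values are forced by division, the restriction map is injective; hence every such category arises in the stated way.
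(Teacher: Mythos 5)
There is a genuine gap, and it sits exactly where you predicted the crux would be: the claim that ``nondegeneracy is exactly what guarantees \dots\ division by a fixed Kronecker factor is both possible and unique.'' Nondegeneracy of the divisor gives \emph{uniqueness} of a Kronecker quotient when one exists ($A \otimes X = A \otimes Y$ with $A \neq 0$ forces $X = Y$), but it does not give \emph{existence}: $A \otimes X = B$ is solvable only when every block of $B$ is proportional to a single fixed matrix, with ratios prescribed by the entries of $A$, and a generic matrix of the correct dimensions --- however one reads ``nondegenerate'' --- has no such structure. Concretely, let $D$ be the diamond with arcs $(1,2),(1,3),(2,4),(3,4)$, let $P$ have arcs $(1,2),(1,3),(3,4)$, and take $s = t = 2$ on each arc of $D$, whence $s_{14} = t_{14} = 4$; this is a $\mathbb{Z}_+$ solution of \eqref{eq:multiplicative}. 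The composition law \eqref{eq:enrichedComposition1} forces $\mathbf{C}(1,2) \otimes \mathbf{C}(2,4) = \mathbf{C}(1,4) = \mathbf{C}(1,3) \otimes \mathbf{C}(3,4)$, and the rigidity fact you yourself cite then forces $\mathbf{C}(1,3) = \lambda\, \mathbf{C}(1,2)$ for some scalar $\lambda \neq 0$. So if the polytree data are chosen with $\mathbf{C}(1,2)$ and $\mathbf{C}(1,3)$ invertible but not proportional (say $I_2$ and $\left ( \begin{smallmatrix} 1 & 1 \\ 0 & 1 \end{smallmatrix} \right )$), the quotient $\mathbf{C}(2,4)$ you need simply does not exist, and no other choice of spanning polytree avoids the obstruction. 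Rigidity of Kronecker factorization therefore cuts against you, not for you: it shows that ``diamond'' relations in $\langle D \rangle$ impose proportionality constraints on the generating data itself, so the consistency step you flagged as the main obstacle cannot be closed in the stated generality, and your forward direction asserts something false.

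What survives is your final paragraph. Uniqueness of Kronecker quotients does show that an $\text{Arr}(\mathbf{FinVect}_S)$-category is determined by its restriction to $A(P)$: the category itself supplies every quotient, and each is unique, so the restriction map is injective and every such category is recovered from (its own) polytree data. This injectivity/parameterization claim is the defensible content of the corollary, and it is also all the paper itself offers --- the corollary is stated without proof, resting on the equally loose remark that ``except in degenerate cases, we can perform Kronecker division.'' To make the specification direction literally true one must either restrict to digraphs whose closure $\langle D \rangle$ contains no diamond relations (e.g.\ $D$ itself a polytree, where your tree-path construction does go through), or cut the parameter space $\prod_{(j,j') \in A(P)} M_{(s_{(j,j')},t_{(j,j')})}(S)$ down by the proportionality constraints that rigidity extracts from each such relation.
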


In the present setting over the base field $S = \mathbb{C}$, the size maps that are also norms are commonly called \emph{cross norms}. The permutation-invariant size maps for matrices over $\mathbb{C}$ are precisely the Schatten $p$-norms $\| \cdot \|_p$, i.e., the $\ell_p$ norms of the vector whose entries are the singular values of a matrix \cite{aubrun2011multiplicative}.
\footnote{
The norms for $p = 1,2,\infty$ respectively are called trace/nuclear; Frobenius/Hilbert-Schmidt; and operator/spectral. It is the case that $\|S\|_\infty \le \|S\|_2 \le \|S\|_1$. The relevant theory also owes much to both Grothendieck \cite{pisier2012grothendieck} and von Neumann \cite{schatten1948cross}.
} 

As in \S \ref{sec:Matrix0}, replacing the arc matrices with sizes factors through to the scalar case of \S \ref{sec:Scalar} all over again.

\subsection{\label{sec:stoch}Stochastic matrices}

The constructions of \S \ref{sec:Matrix0} and \S \ref{sec:Matrix1} trivially specialize to the case where the matrices involved are (row)-stochastic. 
\footnote{
Note however that the inverse of a nonnegative stochastic matrix must contain negative entries, so in either case only DAGs are obvious candidates for applications. The category $\mathbf{FinStoch}$ whose objects and morphisms are respectively finite sets and stochastic maps represented as row-stochastic matrices is discussed in \cite{fritz2020synthetic}.
}
There is not an information-theoretical size map that can take the place of the determinant for an analogue of \S \ref{sec:Matrix0}, as footnote \ref{foot:determinant} points out. However, there is a meaningful information-theoretical size map that produces a nontrivial analogue of \S \ref{sec:Matrix1}, viz. $\mathbf{C}(j,k) \mapsto \exp C(\mathbf{C}(j,k))$, where $C$ indicates the \emph{channel capacity} \cite{shannon1957certain}.  
\footnote{
For background on information theory, see \cite{thomas2006elements}.
B. Fong and D. I. Spivak showed in a personal communication (2019) that channel capacity is a strict monoidal lax functor $(\mathbf{FinStoch},\otimes) \rightarrow ([0,\infty],(\infty,\min),\ge,(+,0))$, where we indicate the lax monoidal po-monoid with one object $*$, with $\text{Hom}(*,*) = [0,\infty]$, with identity $\infty$ and composition given by $\min$, with local partial order (po) structure given by the usual $\ge$, and with monoidal structure given by $+$ and $0$. This result also holds when replacing both i) the Kronecker/tensor $\otimes$ monoidal structure on $\mathbf{FinStoch}$ with the direct sum $\oplus$ monoidal structure and ii) channel capacity $C$ with its exponent $\exp C$.
}

\begin{example}

A \emph{discrete memoryless channel} (DMC) is specified by a matrix of conditional probabilities $W_{jk} = \mathbb{P}(y_k|x_j)$, where $x_j$ and $y_k$ respectively denote input and output symbols corresponding to realizations of random variables $X$ and $Y$. $W_{jk}$ is the probability that if Alice transmits $x_j$, then Bob receives $y_k$. An input distribution $p_j = \mathbb{P}(x_j)$ yields joint probabilities $V_{jk} := \mathbb{P}(x_j,y_k) = p_j W_{jk}$. The channel coding theorem states that reliable communication is possible iff the ratio of informative to transmitted bits is less than the channel capacity $C := \sup_p I(X;Y)$, where $I(X;Y)$ is the mutual information.

Let $D$ be given with arc data as in Figure \ref{fig:Channels}, and where $W^{(i)}$ indicates the matrix for a DMC with corresponding capacity $\log c_i$. The resulting similarity matrix is 
$$
Z = \begin{pmatrix}
1 & 0 & c_1 c_3 & c_1 & c_1 c_3 c_4 & c_1 c_3 c_5 \\
0 & 1 & c_2 c_3 & c_2 & c_2 c_3 c_4 & c_2 c_3 c_5 \\
0 & 0 & 1 & 0 & c_4 & c_5 \\
0 & 0 & 0 & 1 & c_3 c_4 & c_3 c_5 \\
0 & 0 & 0 & 0 & 1 & 0 \\
0 & 0 & 0 & 0 & 0 & 1
\end{pmatrix}
$$
and the equation $Zw = 1$ can be solved by hand to yield
$$w = (1-[1-c_4]c_1c_3-[1-c_3c_5]c_1, 1-[1-c_5]c_2c_3-[1-c_3c_4]c_2, 1-c_4-c_5, 1-c_3c_4-c_3c_5, 1, 1)^T.$$ There is an asymmetry in form between $w_3 = 1-c_4-c_5$ and $w_4 = 1-c_3c_4-c_3c_5$. This reflects the fact that even though the end-to-end capacity is path-independent, the capacity of channels to node 3 differs from the capacity of channels to node 4. Note that if we switch the channels from vertex 2 with each other and the channels to vertex 6 with each other that this asymmetry disappears.
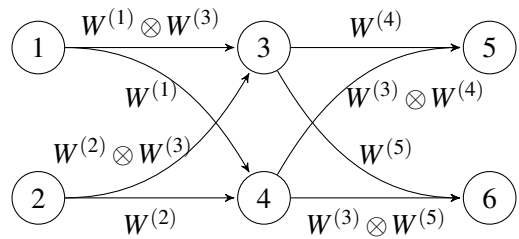
\begin{figure}[htbp]
	\begin{center}
	\begin{tikzpicture}[scale=1,->,>=stealth',shorten >=1pt,every node/.style={transform shape}]
		\node [draw,circle,minimum size=7mm] (v1) at (0,2) {1};
		\node [draw,circle,minimum size=7mm] (v2) at (0,0) {2};
		\node [draw,circle,minimum size=7mm] (v3) at (3,2) {3};
		\node [draw,circle,minimum size=7mm] (v4) at (3,0) {4};
		\node [draw,circle,minimum size=7mm] (v5) at (6,2) {5};
		\node [draw,circle,minimum size=7mm] (v6) at (6,0) {6};
		\path [->,color=black] (v1) edge node [above] {$W^{(1)} \otimes W^{(3)}$} (v3);
		\path [->,color=black,out=0,in=120,looseness=1] (v1) edge node [pos = 0.4, below] {$W^{(1)}$ \quad } (v4);
		\path [->,color=black,out=0,in=-120,looseness=1] (v2) edge node [pos = 0.4, above] {$W^{(2)} \otimes W^{(3)}$ \quad \quad \quad } (v3);
		\path [->,color=black] (v2) edge node [below] {$W^{(2)}$} (v4);
		\path [->,color=black] (v3) edge node [above] {$W^{(4)}$} (v5);
		\path [->,color=black,out=-60,in=-180,looseness=1] (v3) edge node [pos = 0.6, above] { \quad $W^{(5)}$} (v6);
		\path [->,color=black,out=60,in=-180,looseness=1] (v4) edge node [pos = 0.6, below] { \quad \quad \quad $W^{(3)} \otimes W^{(4)}$} (v5);
		\path [->,color=black] (v4) edge node [below] {$W^{(3)} \otimes W^{(5)}$} (v6);
	\end{tikzpicture}
	\end{center}
	\caption{\label{fig:Channels} A $\mathbf{FinStoch}$-category (with nontrivial compositions suppressed for clarity).}
\end{figure}
\end{example}

\subsubsection{\label{sec:capacity}Sidebar on channel capacity and coweightings}

In the spirit of understanding and applying magnitude in atypical contexts, it can be instructive to consider linear equations $Zw = 1$ and/or $vZ = 1^T$ where the dissimilarity matrix $Z$ cannot be written in the form $\exp[-td]$ for any Lawvere metric $d$. For instance, the capacity-achieving input distribution of a DMC with invertible channel matrix $W$ can be thought of as such a normalized coweighting. 
The key to realizing this is a classical formula due to Muroga \cite{muroga1953capacity}. 

Let $W$ be an invertible channel matrix with $M := W^{-1}$ and define $H_j := -\sum_k W_{jk} \log W_{jk}$. Furthermore, define $v_j := \sum_i M_{ij} \exp \left ( - \sum_k M_{ik} H_k \right )$. The Muroga formula states that if $v > 0$, then $e^C = \sum_j \exp \left ( - \sum_k M_{jk} H_k \right )$, and $p := e^{-C} v$ is a capacity-achieving distribution. Now $v = \exp(-MH)M = 1^T \Delta(\exp(-MH))M$, so writing $Z := W \Delta(\exp(MH))$, we obtain $vZ = 1^T$. However, the putative distance $-t^{-1}[\log W_{jk} - (MH)_k]$ is generically negative on the diagonal, though also approximately equal to a constant times $1-\delta_{jk}$ for a ``good channel'' $W \approx I$. The better the DMC is, the closer its induced ``metric geometry'' (where the negative diagonal entails the quotation marks) is to that of a regular simplex.

\section{\label{sec:Remarks}Remarks on ``flow-like'' graphs augmented with data}

An interesting if still speculative possibility for deeper applications to processes, computer programs, etc. is to consider categories of the form $\mathbf{F} \times \mathbf{M}$ where $\mathbf{F}$ is a suitable category of ``flow-like'' digraphs that have single inputs and outputs and $\mathbf{M}$ is a suitable category of data (perhaps scalar if not in the spirit of a nondegenerate matrix category). The monoidal structure on $\mathbf{F}$ is simply gluing the output of one `flow-like'' digraph to the input of another along the lines of \cite{huntsman2022magnitude}. That is, each ``flow-like'' graph has associated data that compose nicely under series composition. A specific instantiation of $\mathbf{F}$ that is algorithmically and categorically well-behaved (in particular, semicartesian) is likely obtainable via minor technical modifications to the notion of a \emph{two-terminal graph} \cite{vanhatalo2008refined}.

In order to have a useful notion of magnitude in this context, it is necessary to have a size function (or for magnitude homology \cite{leinster2021magnitude}, a strong symmetric monoidal functor from a semicartesian symmetric monoidal category to a suitable abelian symmetric monoidal category) that consolidates the graphical and matrix data. A good candidate for constructing something in this vein appears to be the zeta function of a finite Markov chain \cite{parry1977block} that encodes (e.g.) graph traversal probabilities.

\section*{Acknowledgement}

This research was developed with funding from the Defense Advanced Research Projects Agency (DARPA). The views, opinions and/or findings expressed are those of the author and should not be interpreted as representing the official views or policies of the Department of Defense or the U.S. Government. Distribution Statement ``A'' (Approved for Public Release, Distribution Unlimited).

%\nocite{*}
\bibliographystyle{eptcs}
\bibliography{linear} % was topEnt

%\appendix

\end{document}